\icmltitlerunning{Faster Rates for the Frank-Wolfe Method over Strongly-Convex Sets}
\newtheorem{theorem} {Theorem}
\newtheorem{lemma} {Lemma}
\newtheorem{definition} {Definition}
\newtheorem{corollary} {Corollary}
\newcommand{\mK}{\mathcal{K}}
\newcommand{\ball}{\mathbb{B}}
\newcommand{\vecspace}{\textbf{E}}
\newcommand{\Rn}{\mathbb{R}^n}
\newcommand{\Rmn}{\mathbb{R}^{m\times{n}}}
\newcommand{\oraclek}{\mathcal{O}_{\mK}}
\newcommand{\nnz}{\textrm{nnz}}
\begin{document} 

\twocolumn[
\icmltitle{Faster Rates for the Frank-Wolfe Method over Strongly-Convex Sets}

\icmlauthor{Dan Garber}{dangar@tx.technion.ac.il}
\icmladdress{Technion - Israel Institute of Technology}
\icmlauthor{Elad Hazan}{ehazan@cs.princeton.edu}
\icmladdress{Princeton University}

\icmlkeywords{boring formatting information, machine learning, ICML}

\vskip 0.3in
]

\begin{abstract} 
The Frank-Wolfe method (a.k.a. conditional gradient algorithm) for smooth optimization has regained much interest in recent years in the context of large scale optimization and machine learning. A key advantage of the method is that it avoids projections - the computational bottleneck in many applications - replacing it by a linear optimization step. Despite this advantage, the known convergence rates of the FW method fall behind standard first order methods for most settings of interest. It is an active line of research to derive faster linear optimization-based algorithms for various settings of convex optimization. 

In this paper we consider the special case of optimization over strongly convex sets, for which we prove that the vanila FW method converges at a rate of  $\frac{1}{t^2}$. This gives a quadratic improvement in convergence rate compared to the general case, in which convergence is of the order $\frac{1}{t}$, and known to be tight.
We show that various balls induced by $\ell_p$ norms, Schatten norms and group norms are strongly convex on one hand and on the other hand, linear optimization over these sets is straightforward and admits a closed-form solution.
We further show how several previous fast-rate results for the FW method follow easily from our analysis.
\end{abstract} 

\section{Introduction}

The Frank-Wolfe method, originally introduced by Frank and Wolfe in the 1950's \cite{FrankWolfe}, is a \textit{first order} method for the minimization of a smooth convex function over a convex set. Its main advantage in large-scale problems is that it is a first-order and projection-free method - i.e. the algorithm proceeds by iteratively solving a linear optimization problem and remaining inside the feasible domain. For matrix completion problems, metric learning, sparse PCA, structural SVM and other large-scale machine learning problems, this feature enabled the derivation of algorithms that are practical on one hand and come with provable convergence rates on the other  \cite{Jaggi10, Jaggi13a, Dudik12a, Dudik12b, Hazan12, ShalevShwartz11, Laue12}.

Despite its empirical success, the main drawback of the method is its relatively slow convergence rate in comparison to optimal first order methods. The convergence rate of the method is on the order of $1/t$ where $t$ is the number of iterations, and this is known to be tight. In contrast, Nesterov's accelerated gradient descent method gives a  rate of $1/t^2$  for general convex smooth problems and a rate $e^{-\Theta(t)}$ is known for smooth and strongly convex problems.  The following question arises: are there projection-free methods with convergence rates matching that of projected gradient-descent and its extensions?

Motivated by this question, in this work we advance the line of research for faster convergence rates of projection free methods. We prove that in case both the objective function and the feasible set are strongly convex (in fact a slightly weaker assumption than strong convexity of the objective is required), the vanilla Frank-Wolfe method converges at an accelerated rate of  $1/t^2$. The improved convergence rate is independent of the dimension. This is also the first convergence result for the FW that we are aware of that achieves a rate that is between the standard $1/t$ rate and a linear rate.
We further show how the analysis used to prove the latter result enables to easily derive previous fast convergence rates for the FW method.

We motivate the study of optimization over strongly convex sets by demonstrating that various norms that serve as popular regularizes in machine learning problems, including $\ell_p$ norms, matrix Schatten norms and matrix group norms, give rise to strongly convex sets. We further show that indeed linear optimization over these sets is straightforward to implement and admits a closed-form solution. Hence the FW method is appealing for solving optimization problems with such constraints, such as regularized linear regression.

\subsection{Related Work}

The Frank-Wolfe method dates back to the original work of Frank and Wolfe \cite{FrankWolfe} which presented an algorithm for minimizing a quadratic function over a polytope using only linear optimization steps over the feasible set. Recent results by Clarkson \cite{Clarkson}, Hazan \cite{Hazan08} and Jaggi \cite{Jaggi13b} extend the method to smooth convex optimization over the \textit{simplex}, \textit{spectrahedron} and arbitrary convex and compact sets respectively. 

It was shown in numerous works that the convergence rate of the method is on the order of $1/t$ and that it could not be improved in general, even if the objective function is strongly convex for instance, as shown in \cite{Clarkson, Hazan08, Jaggi13b}, even though it is known that in this case, the projected gradient method achieves an exponentially fast convergence rate.  

Over the past years, several results tried to improve the convergence rate of the Frank-Wolfe method under various assumptions. Gu{\'{e}}Lat and Marcotte \cite{GueLat1986} showed that in case the objective function is strongly convex and the feasible set is a polytope, then in case the optimal solution is located in the interior of the set, the FW method converges exponentially fast. A similar result was presented in the work of Beck and Teboulle \cite{BeckTaboule} who considered a specific problem they refer to a \textit{the convex feasibility problem} over an arbitrary convex set. They also obtained a linear convergence rate under the assumption that an optimal solution that is far enough from the boundary of the set exists.

Recently, Garber and Hazan \cite{Garber13} gave the first natural linearly-converging FW variant without any restricting assumptions on the location of the optimum. They showed that a variant of the Frank Wolfe method with the addition of \textit{away steps} converges exponentially fast in case the objective function is strongly convex and the feasible set is a polytope. 
In follow-up work, Jaggi and Lacoste-Julien \cite{Jaggi13c} gave a refined analysis of an algorithm presented in \cite{GueLat1986} which also uses away steps and showed that it also converges exponentially fast in the same setting as the Garber-Hazan result. Also relevant in this context is the work of Ahipasaoglu, Sun and Todd \cite{Ahipasaoglu08} who showed that in the specific case of minimizing a smooth and strongly convex function over the unit simplex, a variant of the Frank-Wolfe method that also uses away steps converges with a linear rate.

In a different line of work, Migdalas and recently Lan \cite{Migdalas, Lan13} considered the Frank-Wolfe algorithm with a stronger optimization oracle that is able to solve quadratic problems over the feasible domain. They show that in case the objective function is strongly convex then exponentially fast convergence is attainable. However, in most settings of interest, an implementation of such a non-linear oracle is computationally much more expensive than the linear oracle, and the key benefit of the Frank-Wolfe method is lost. 

In the specific case that the feasible set is strongly convex, an assumption also made in this paper, Levitin and Polyak showed in their classical work \cite{Polyak} that under the restrictive assumption that the norm of the gradient of the objective function is lower bounded by a constant everywhere in the feasible set, the FW method converges with an exponential rate. The same result appeared in following works by Demyanov and Rubinov \cite{Demyanov70} and Dunn \cite{Dunn79}, both also requiring that the magnitude of the gradients is lower bounded by a constant everywhere in the feasible set. As we later show, the lower bound requirement on the gradients is in a sense much stronger than requiring that the objective function is strongly convex.  Under our assumption however, which is slightly weaker than strong convexity of the objective, the gradient may become arbitrarily small on the feasible set.

We summarize previous convergence rate results for the standard FW method in Table \ref{table:prevwork}.

\begin{table*}\label{table:prevwork}
\begin{center}
  \begin{tabular}{| c | c | c | c | c | c |}
    \hline
    Reference & Feasible set $\mK$ & Objective function $f$ & Location of $x^*$ & Conv. rate \\ \hline
    \cite{Jaggi13b} & convex & convex & unrestricted  & $1/t$ \\ \hline
    \cite{GueLat1986} & polytope & strongly convex & interior  & $\exp(-\Theta(t))$ \\ \hline
    \cite{BeckTaboule} & convex & $f(x) = \Vert{Ax-b}\Vert_2^2$ & interior & $\exp(-\Theta(t))$ \\ \hline
    \cite{Polyak} &  &   &  &  \\ 
    \cite{Demyanov70} & strongly convex & $\Vert{\nabla{}f(x)}\Vert \geq c > 0 \quad \forall{x\in\mK}$  & unrestricted & $\exp(-\Theta(t))$ \\ 
    \cite{Dunn79} & &   & &  \\ \hline

    this paper & strongly convex & strongly convex  & unrestricted & $1/t^2$ \\ \hline
    
  \end{tabular}
  \caption{Comparison of convergence rates for the Frank-Wolfe method under different assumptions. We denote the optimal solution by $x^*$. Note that since all results assume smoothness of the function we omit it from column 3.}
\end{center}
\end{table*}

\section{Preliminaries}\label{sec:preliminaries}

\subsection{Smoothness and Strong Convexity}

For the following definitions let $\vecspace$ be a finite vector space and $\Vert\cdot\Vert$, $\Vert\cdot\Vert_*$  be a pair of dual norms over $\vecspace$.

\begin{definition}[smooth function]\label{def:smoothfunc}
We say that a function $f:\vecspace\rightarrow\mathbb{R}$ is $\beta$ smooth over a convex set $\mK\subset\vecspace$  with respect to $\Vert\cdot\Vert$ if for all $x,y\in\mK$ it holds that
\begin{eqnarray*}
f(y) \leq f(x) + \nabla{}f(x)\cdot(y-x) + \frac{\beta}{2}\Vert{x-y}\Vert^2 .
\end{eqnarray*}
\end{definition}

\begin{definition}[strongly convex function]\label{def:strongconvexfunc}
We say that a function $f:\vecspace\rightarrow\mathbb{R}$ is $\alpha$-strongly convex over a convex set $\mK\subset\vecspace$ with respect to $\Vert\cdot\Vert$ if it satisfies the following two equivalent conditions
 
\begin{enumerate}
\item $\forall{x,y\in\mK}:$
\begin{eqnarray*}
f(y) \geq f(x) + \nabla{}f(x)\cdot(y-x) + \frac{\alpha}{2}\Vert{x-y}\Vert^2 .
\end{eqnarray*}
\item $\forall{x,y\in\mK, \gamma\in[0,1]}:$
\begin{eqnarray*}
f(\gamma{}x+(1-\gamma)y) &\leq &\gamma{}f(x) + (1-\gamma)f(y) \\
&-&\frac{\alpha}{2}\gamma(1-\gamma)\Vert{x-y}\Vert^2 .
\end{eqnarray*}

\end{enumerate}

\end{definition}

The above definition (part 1) combined with first order optimality conditions imply that for a $\alpha$-strongly convex function $f$, if $x^*=\arg\min_{x\in\mK}f(x)$, then for any $x\in\mK$
\begin{eqnarray}\label{ie:strongconvex}
f(x)-f(x^*) \geq \frac{\alpha}{2}\Vert{x-x^*}\Vert^2 .
\end{eqnarray}

Eq. \eqref{ie:strongconvex} further implies that the magnitude of the gradient of $f$ at point $x$,  $\Vert{\nabla{}f(x)}\Vert_*$ is  at least of the order of the square-root of the approximation error at $x$, $f(x)-f(x^*)$. This follows since
\begin{align*}
\sqrt{\frac{2}{\alpha}\left({f(x)-f(x^*)}\right)}\cdot\Vert{\nabla{}f(x)}\Vert_* &\geq \Vert{x-x^*}\Vert\cdot\Vert{\nabla{}f(x)}\Vert_* \\
&\geq (x-x^*)\cdot\nabla{}f(x)\\
& \geq  f(x) - f(x^*) ,
\end{align*}
where the first inequality follows from \eqref{ie:strongconvex}, the second from Holder's inequality and the third from convexity of $f$. Thus we have that at any point $x\in\mK$ it holds that
\begin{eqnarray}\label{ie:largegrad}
\Vert{\nabla{}f(x)}\Vert_* \geq \sqrt{\frac{\alpha}{2}}\cdot\sqrt{f(x)-f(x^*)} .
\end{eqnarray}
We will show that this property, that is in fact weaker than strong convexity, combined with an additional property of the convex set that we define next, allows to obtain the faster rates \footnote{In this work we assume that the convex set $\mK$ is full-dimensional. In case this assumption does not hold, e.g. if the convex set is the unit simplex, then Eq. \eqref{ie:largegrad} holds even if we replace $\nabla{}f(x)$ with $P_{S(\mK)}[\nabla{}f(x)]$ where $P_{S(\mK)}$ denotes the projection operator onto the smallest subspace that contains $\mK$.}.
\begin{definition}[strongly convex set]\label{def:strongconvexset}
We say that a convex set $\mK\subset\vecspace$ is $\alpha$-strongly convex with respect to $\Vert\cdot\Vert$ if for any $x,y\in\mK$, any $\gamma\in[0,1]$ and any vector $z\in\vecspace$ such that $\Vert{z}\Vert=1$, it holds that
\begin{eqnarray*}
\gamma{}x + (1-\gamma)y + \gamma(1-\gamma)\frac{\alpha}{2}\Vert{x-y}\Vert^2z\in\mK .
\end{eqnarray*}
That is, $\mK$ contains a ball of  of radius $\gamma(1-\gamma)\frac{\alpha}{2}\Vert{x-y}\Vert^2$ induced by the norm $\Vert\cdot\Vert$ centered at $\gamma{}x + (1-\gamma)y$.
\end{definition}

\subsection{The Frank-Wolfe Algorithm}

The Frank-Wolfe algorithm, also known as the \textit{conditional gradient algorithm}, is an algorithm for the minimization of a convex function $f:\vecspace\rightarrow\mathbb{R}$ which is assumed to be $\beta_f$-smooth with respect to a norm $\Vert\cdot\Vert$, over a convex and compact set $\mK\subset\vecspace$. The algorithm implicitly assumes that the convex set $\mK$ is given in terms of a linear optimization oracle $\oraclek:\vecspace\rightarrow\mK$ which given a linear objective $c\in\vecspace$ returns a point $x=\oraclek(c)\in\mK$ such that $x\in\arg\min_{y\in\mK}y\cdot{}c$. The algorithm is given below. The algorithm proceeds in iterations, taking on each iteration $t$ the new iterate $x_{t+1}$ to be a convex combination between the previous feasible iterate $x_t$ and a feasible point that minimizes the dot product with the gradient direction at $x_t$, which is generated by invoking the oracle $\oraclek$ with the input vector $\nabla{f}(x_t)$. There are various ways to set the parameter that controls the convex combination $\eta_t$ in order to guarantee convergence of the method. The option that we choose here is the optimization of $\eta_t$ via a simple line search rule, which is straightforward and computationally cheap to implement.

\begin{algorithm}[h]
\caption{Frank-Wolfe Algorithm}
\label{alg:condgrad}
\begin{algorithmic}[1]
\STATE Let $x_0$ be an arbitrary point in $\mK$.
\FOR{$t = 0,1,...$}
\STATE $p_{t} \gets \oraclek(\nabla{}f(x_t))$.
\STATE  $\eta_t \gets \arg\min_{\eta\in[0,1]}\eta(p_t - x_t)\cdot\nabla{}f(x_t) + \eta^2\frac{\beta_f}{2}\Vert{p_t-x_t}\Vert^2$.
\STATE $x_{t+1} \gets x_t + \eta_t(p_t-x_t)$.
\ENDFOR
\end{algorithmic}
\end{algorithm}

The following theorem states the well-known convergence rate of the Frank-Wolfe algorithm for smooth convex minimization over a compact and convex set, without any further assumptions. A proof is given in the appendix for completeness though similar proofs could also be found in \cite{Polyak, Jaggi13b}.
\begin{theorem}\label{thr:originalfw}
Let $x^*\in\arg\min_{x\in\mK}f(x)$ and denote $D_{\mK}=\max_{x,y\in\mK}\Vert{x-y}\Vert$ (the diameter of the set with respect to $\Vert\cdot\Vert$). For every $t\geq 1$ the iterate $x_t$ of Algorithm \ref{alg:condgrad} satisfies
\begin{eqnarray*}
f(x_t) - f(x^*) \leq \frac{8\beta_fD_{\mK}^2}{t} = O\left({\frac{1}{t}}\right) .
\end{eqnarray*}
\end{theorem}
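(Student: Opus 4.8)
The plan is the standard potential-function argument for conditional-gradient methods. Write $h_t := f(x_t) - f(x^*)$ for the approximation error and set $C := \beta_f D_{\mK}^2$ for brevity. The first step is to record the one-step progress guarantee. Since $\eta_t \in [0,1]$ and $x_t, p_t \in \mK$, the new iterate $x_{t+1} = x_t + \eta_t(p_t - x_t)$ is a convex combination of points of $\mK$, hence lies in $\mK$; more generally $x_t + \eta(p_t-x_t) \in \mK$ for every $\eta \in [0,1]$. Applying $\beta_f$-smoothness (Definition \ref{def:smoothfunc}) to $x_t$ and $x_t + \eta(p_t - x_t)$ gives
\begin{eqnarray*}
f\big(x_t + \eta(p_t-x_t)\big) \leq f(x_t) + \eta(p_t-x_t)\cdot\nabla{}f(x_t) + \eta^2\frac{\beta_f}{2}\Vert{p_t-x_t}\Vert^2
\end{eqnarray*}
for all $\eta \in [0,1]$. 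Because $\eta_t$ is defined to minimize exactly the right-hand side over $[0,1]$, this inequality in fact holds with $f(x_{t+1})$ on the left and an \emph{arbitrary} $\eta \in [0,1]$ on the right.

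The second step is to bound the two variable terms independently of $\eta$. Since $p_t = \oraclek(\nabla{}f(x_t))$ minimizes $y\cdot\nabla{}f(x_t)$ over $y\in\mK$ and $x^*\in\mK$, we have $(p_t - x_t)\cdot\nabla{}f(x_t) \leq (x^* - x_t)\cdot\nabla{}f(x_t) \leq f(x^*) - f(x_t) = -h_t$, where the second inequality is convexity of $f$. Also $\Vert{p_t-x_t}\Vert \leq D_{\mK}$ by definition of the diameter. Substituting these bounds and subtracting $f(x^*)$ from both sides yields the master recursion
\begin{eqnarray*}
h_{t+1} \leq (1-\eta)h_t + \frac{\eta^2 C}{2}, \qquad \forall\,\eta\in[0,1].
\end{eqnarray*}

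The third step closes the argument by induction on $t$, establishing $h_t \leq 8C/t$ for every $t \geq 1$. Taking $\eta = 1$ in the recursion at $t=0$ gives $h_1 \leq C/2 \leq 8C$, and similarly at $t=1$ gives $h_2 \leq C/2 \leq 4C$, so the bound holds for $t = 1, 2$. For the inductive step, assume $h_t \leq 8C/t$ with $t \geq 2$ and choose $\eta = 2/t \in (0,1]$; then
\begin{eqnarray*}
h_{t+1} \leq \Big(1-\frac{2}{t}\Big)\frac{8C}{t} + \frac{2C}{t^2} = \frac{8C}{t} - \frac{14C}{t^2} \leq \frac{8C}{t} - \frac{8C}{t(t+1)} = \frac{8C}{t+1},
\end{eqnarray*}
where the last inequality uses $14/t \geq 8/(t+1)$ for $t \geq 1$. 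This completes the induction and proves the claimed $O(1/t)$ rate.

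I do not expect a genuine obstacle: the proof is elementary. The two points that require a little attention are (i) observing that the line-search rule for $\eta_t$ makes the per-iteration decrease at least as good as that of any prescribed step size, so one may freely plug the "analysis" choice $\eta = 2/t$ into the bound even though the algorithm never computes it, and (ii) choosing the induction constants so that the loose base cases ($h_1, h_2 \leq C/2$) are consistent with the decaying bound $8C/t$ — both handled above.
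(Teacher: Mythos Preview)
Your proof is correct and follows essentially the same approach as the paper: derive the master recursion $h_{t+1} \leq (1-\eta)h_t + \tfrac{\eta^2}{2}\beta_f D_{\mK}^2$ from smoothness, the oracle optimality of $p_t$, and convexity, then close by induction on $t$. The only cosmetic difference is that you plug in the fixed analysis step $\eta = 2/t$, whereas the paper performs a two-case split (either $h_t \leq C/(2t)$, in which case $h_{t+1}\leq h_t$ already suffices, or $h_t > C/(2t)$, in which case it inserts the optimal step $\eta_t = h_t/(\beta_f D_{\mK}^2)$); both routes yield the same $8\beta_f D_{\mK}^2/t$ bound.
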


\subsection{Our Results}
In this work, we consider the case in which the function to optimize $f$ is not only $\beta_f$-smooth with respect to $\Vert\cdot\Vert$ but also $\alpha_f$-strongly convex with respect to $\Vert\cdot\Vert$ (we relax this assumption a bit in subsection \ref{sec:relax}). We further assume that the feasible set $\mK$ is $\alpha_{\mK}$-strongly convex with respect to $\Vert\cdot\Vert$. Under these two additional assumptions alone we prove the following theorem.

\begin{theorem}\label{thr:newfw}
Let $x^*=\arg\min_{x\in\mK}f(x)$ and let $M =\frac{\sqrt{\alpha_f}\alpha_K}{8\sqrt{2}\beta_f}
$ . Denote $D_{\mK}=\max_{x,y\in\mK}\Vert{x-y}\Vert$. For every $t \geq 1$ the iterate $x_t$ of Algorithm \ref{alg:condgrad} satisfies
\begin{eqnarray*}
f(x_t)-f(x^*) \leq \frac{\max\lbrace{\frac{9}{2}\beta_fD_{\mK}^2, 18M^{-2}}\rbrace}{(t+2)^2} = O\left({\frac{1}{t^2}}\right).
\end{eqnarray*}
\end{theorem}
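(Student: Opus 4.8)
Write $h_t := f(x_t)-f(x^*)$ and let $g_t := (x_t-p_t)\cdot\nabla{}f(x_t)$ denote the Frank--Wolfe gap at iteration $t$. Since $p_t$ minimizes $y\mapsto y\cdot\nabla{}f(x_t)$ over $\mK$ and $f$ is convex, we have $g_t\geq (x_t-x^*)\cdot\nabla{}f(x_t)\geq h_t\geq 0$. The idea is to sharpen the textbook analysis (Theorem~\ref{thr:originalfw}) by shrinking the quadratic term $\eta^2\Vert{p_t-x_t}\Vert^2$ that appears in the smoothness bound, using both extra hypotheses. I would combine: (i) the per-step inequality $h_{t+1}\leq h_t-\eta{}g_t+\tfrac{\eta^2\beta_f}{2}\Vert{p_t-x_t}\Vert^2$, valid for \emph{every} $\eta\in[0,1]$ because $\eta_t$ is the exact line-search minimizer of the quadratic majorant; (ii) a lower bound on $g_t$ coming from strong convexity of $\mK$; and (iii) the lower bound \eqref{ie:largegrad} on $\Vert{\nabla{}f(x_t)}\Vert_*$ coming from strong convexity of $f$.

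For step (ii), I would invoke Definition~\ref{def:strongconvexset} with $x=p_t$, $y=x_t$, $\gamma=\tfrac12$, and $z$ a unit vector (in $\Vert\cdot\Vert$) with $z\cdot\nabla{}f(x_t)=-\Vert{\nabla{}f(x_t)}\Vert_*$. Then $w:=\tfrac12(p_t+x_t)+\tfrac{\alpha_{\mK}}{8}\Vert{p_t-x_t}\Vert^2z\in\mK$, so optimality of $p_t$ gives $p_t\cdot\nabla{}f(x_t)\leq w\cdot\nabla{}f(x_t)$, which rearranges to
\[
g_t\;\geq\;\frac{\alpha_{\mK}}{8}\,\Vert{p_t-x_t}\Vert^2\,\Vert{\nabla{}f(x_t)}\Vert_* .
\]
Combining this with \eqref{ie:largegrad} and the trivial bound $h_t\leq g_t$ yields $\Vert{p_t-x_t}\Vert^2\leq \tfrac{g_t}{M\beta_f\sqrt{h_t}}$, and plugging this into (i) gives the key recursion
\[
h_{t+1}\;\leq\;h_t-g_t\Bigl(\eta-\frac{\eta^2}{2M\sqrt{h_t}}\Bigr),\qquad \eta\in[0,1].
\]

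Step (iii) is then to choose $\eta$ to (nearly) maximize the bracket; its unconstrained optimum is $\eta^\star=M\sqrt{h_t}$. If $h_t\leq M^{-2}$, then $\eta^\star\leq 1$ is admissible and, using $g_t\geq h_t$, this yields $h_{t+1}\leq h_t-\tfrac{M}{2}h_t^{3/2}$; if $h_t>M^{-2}$, then $\eta=1$ is admissible, the bracket exceeds $\tfrac12$, and $h_{t+1}\leq h_t-\tfrac12 g_t\leq\tfrac12 h_t$ — an even faster geometric drop. Finally I would prove $h_t\leq \tfrac{A}{(t+2)^2}$ with $A=\max\{\tfrac92\beta_f D_{\mK}^2,\,18M^{-2}\}$ by induction on $t$: the base case $t=1$ follows from one smoothness step with $\eta=1$ (giving $h_1\leq\tfrac{\beta_f}{2}D_{\mK}^2\leq A/9$); the geometric branch trivially preserves the bound for $t\geq 1$; and for the branch $h_{t+1}\leq h_t-\tfrac{M}{2}h_t^{3/2}$ one checks $\tfrac{A}{(t+2)^2}-\tfrac{M}{2}\bigl(\tfrac{A}{(t+2)^2}\bigr)^{3/2}\leq \tfrac{A}{(t+3)^2}$, using that $s\mapsto s-\tfrac{M}{2}s^{3/2}$ is nondecreasing on $[0,M^{-2}]$ together with $A\geq 18M^{-2}$ (one must split on whether $\tfrac{A}{(t+2)^2}$ exceeds $M^{-2}$, treating the former via $\psi(M^{-2})=\tfrac12 M^{-2}$).

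The main obstacle is genuinely step (ii): the insight that strong convexity of the \emph{feasible set} forces the linear-minimization point $p_t$ to move, relative to the chord $[x_t,p_t]$, in a direction strongly correlated with $-\nabla{}f(x_t)$ by an amount of order $\Vert{p_t-x_t}\Vert^2\Vert{\nabla{}f(x_t)}\Vert_*$; this is precisely what lets the quadratic error term be absorbed into the gap and drives the $1/t^2$ rate. The accompanying subtlety is that $\Vert{\nabla{}f(x_t)}\Vert_*$ could a priori vanish near $x^*$ and destroy this bound — inequality \eqref{ie:largegrad} (weaker than full strong convexity of $f$) is exactly the statement that it cannot vanish faster than $\sqrt{h_t}$, which is the rate needed to keep $\Vert{p_t-x_t}\Vert^2=O(g_t/\sqrt{h_t})$. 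Once these two facts are in place, the $\eta$ case split and the induction are routine, modulo careful tracking of constants.
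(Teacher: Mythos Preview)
Your proposal is correct and follows essentially the same approach as the paper: the key step~(ii) is exactly the paper's Lemma~\ref{lem:fwineq} (strong convexity of $\mK$ applied at the midpoint of $[x_t,p_t]$), and combining it with~\eqref{ie:largegrad} yields a recursion of the form $h_{t+1}\le h_t-c\,h_t^{3/2}$ (or $h_{t+1}\le h_t/2$) which is then pushed to $O(1/t^2)$ by induction. The only cosmetic differences are that the paper keeps the $-\tfrac12 h_t$ and $-\tfrac{\alpha_{\mK}}{8}\Vert p_t-x_t\Vert^2\Vert\nabla f(x_t)\Vert_*$ contributions separate (using the latter to cancel the smoothness term directly, giving the slightly sharper per-step factor $1-M\sqrt{h_t}$), and in the induction the paper splits on whether $h_t>C/(2(t+2)^2)$ rather than invoking monotonicity of $s\mapsto s-\tfrac{M}{2}s^{3/2}$.
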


\section{Proof of Theorem \ref{thr:newfw}}

We denote the approximation error of the iterate $x_t$ produced by the algorithm by $h_t$. That is $h_t = f(x_t) - f(x^*)$ where $x^* = \arg\min_{x\in\mK}f(x)$.

To better illustrate our results, we first shortly revisit the proof technique of Theorem \ref{thr:originalfw}. The main observation to be made is the following:
\begin{eqnarray}\label{old_fw_anal}
&&  h_{t+1} = f(x_t + \eta_t(p_t - x_t)) - f(x^*)  \leq \nonumber\\
&&  h_t + \eta_t(p_t-x_t)\cdot\nabla{}f(x_t) + \frac{\eta_t^2\beta_f}{2}\Vert{p_t-x_t}\Vert^2 \leq \nonumber \\ 
&& h_t + \eta_t(x^*-x_t)\cdot\nabla{}f(x_t) + \frac{\eta_t^2\beta_f}{2}\Vert{p_t-x_t}\Vert^2 \leq \nonumber \\ 
&& (1 - \eta_t)h_t + \frac{\eta_t^2\beta_f}{2}\Vert{p_t-x_t}\Vert^2 ,
\end{eqnarray}
where the the first inequality follows from the smoothness of $f$, the second from the optimality of $p_t$ and the third from convexity of $f$.
Choosing $\eta_t$ to be roughly $1/t$ yields the convergence rate of $1/t$ stated in Theorem \ref{thr:originalfw}. This rate cannot be improved in general since while the so-called \textit{duality gap} $(x_t-p_t)\cdot\nabla{}f(x_t)$ could be arbitrarily small (as small as $(x_t-x^*)\cdot\nabla{}f(x_t)$), the quantity $\Vert{p_t-x_t}\Vert$ may remain as large as the diameter of the set. Note that in case $f$ is strongly-convex, then according to Eq. \eqref{ie:strongconvex} it holds that $x_t$ converges to $x^*$ and thus according to Eq. \eqref{old_fw_anal} it suffices to solve the inner linear optimization problem in Algorithm \ref{alg:condgrad} on the intersection of $\mK$ and a small ball centered at $x_t$. As a result the quantity $\Vert{p_t-x_t}\Vert^2$ will be proportional to the approximation error at time $t$, and a linear convergence rate will be attained. However in general, under the linear oracle assumption, we have no way to solve the linear optimization problem over the intersection of $\mK$ and a ball without greatly increasing the number of calls to the linear oracle, which is the most expensive step in many settings.

In case the feasible set $\mK$ is strongly convex, then the main observation to be made is that  while the quantity $\Vert{p_t-x_t}\Vert$ may still be much larger than $\Vert{x^*-x_t}\Vert$ (the distance to the optimum), in this case, the \textit{duality gap} must also be large, which results in faster convergence. This observation is illustrated in Figure \ref{fig:fw} and given formally in Lemma \ref{lem:fwineq}.
 
\begin{figure}
\centering
\includegraphics[width=0.51\textwidth]{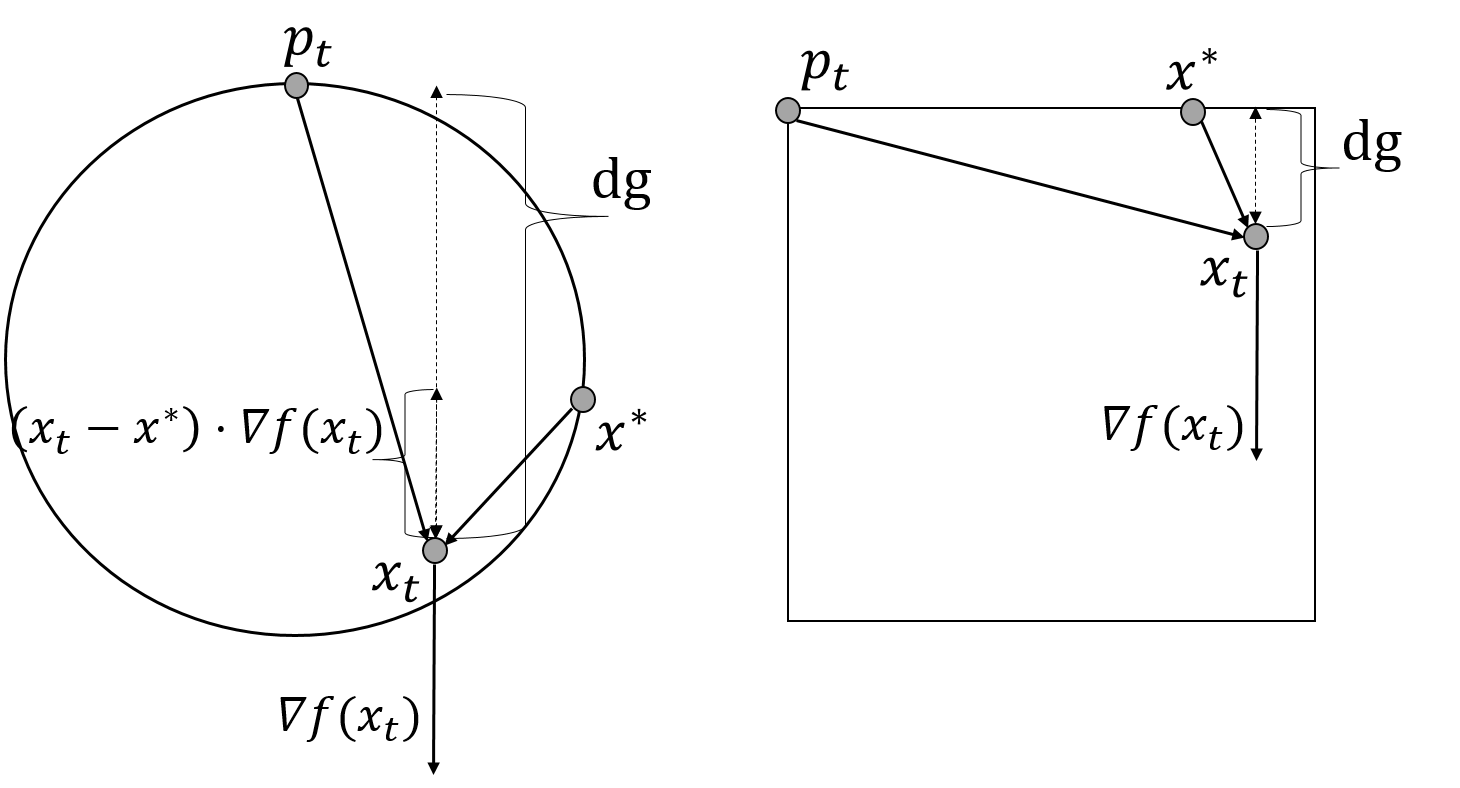}
\caption{For strongly convex sets, as in the left picture, the duality gap (denoted dg) increases with $\Vert{p_t-x_t}\Vert^2$, which accelerates the convergence of the Frank-Wolfe method. As shown in the picture on the right, this property clearly does not hold for arbitrary convex sets.}
\label{fig:fw}
\end{figure}

\begin{lemma}\label{lem:fwineq}
On any iteration $t$ of Algorithm \ref{alg:condgrad} it holds that
\begin{eqnarray*}
h_{t+1} \leq h_t\cdot\max\lbrace{\frac{1}{2}, 1-\frac{\alpha_K\Vert{\nabla{}f(x_t)}\Vert_*}{8\beta_f}}\rbrace .
\end{eqnarray*}
\end{lemma}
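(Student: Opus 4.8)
The plan is to combine the standard smoothness recursion from \eqref{old_fw_anal} with a lower bound on the \emph{duality gap} $g_t := (x_t - p_t)\cdot\nabla f(x_t)$ that exploits the strong convexity of $\mK$. Write $d_t := \Vert p_t - x_t\Vert$. Since $\eta_t$ minimizes over $[0,1]$ the right-hand side of the smoothness inequality, we get, for \emph{every} $\eta\in[0,1]$,
\[
h_{t+1} \;\leq\; h_t - \eta\, g_t + \tfrac{\eta^2\beta_f}{2}\,d_t^2 ,
\]
where moreover $(p_t-x_t)\cdot\nabla f(x_t)\leq (x^*-x_t)\cdot\nabla f(x_t)\leq -h_t$ by optimality of $p_t$ and convexity of $f$, so that $g_t\geq h_t\geq 0$. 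If $h_t=0$ the claim is trivial; otherwise \eqref{ie:largegrad} gives $\nabla f(x_t)\neq 0$, and H\"older applied to $0<h_t\leq g_t\leq d_t\Vert\nabla f(x_t)\Vert_*$ forces $d_t>0$, so all divisions below are legitimate.

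First I would derive the gap bound. Apply Definition~\ref{def:strongconvexset} with $x=x_t$, $y=p_t$, $\gamma=\tfrac12$, and $z$ a unit vector achieving $z\cdot\nabla f(x_t) = -\Vert\nabla f(x_t)\Vert_*$ (such $z$ exists by duality of $\Vert\cdot\Vert$ and $\Vert\cdot\Vert_*$). Since $\gamma(1-\gamma)\tfrac{\alpha_\mK}{2}=\tfrac{\alpha_\mK}{8}$ at $\gamma=\tfrac12$, the point $q := \tfrac12 x_t + \tfrac12 p_t + \tfrac{\alpha_\mK}{8}d_t^2\, z$ lies in $\mK$. Optimality of $p_t$ for the linear objective $\nabla f(x_t)$ gives $p_t\cdot\nabla f(x_t)\leq q\cdot\nabla f(x_t)$; expanding $q$ and rearranging yields
\[
g_t \;=\; (x_t-p_t)\cdot\nabla f(x_t) \;\geq\; \frac{\alpha_\mK}{4}\,d_t^2\,\Vert\nabla f(x_t)\Vert_* .
\]

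Then I would split according to whether the unconstrained minimizer $\eta^\star := g_t/(\beta_f d_t^2)$ of the quadratic is $\geq 1$. If $\eta^\star\geq 1$, i.e. $\beta_f d_t^2\leq g_t$, take $\eta=1$: then $\tfrac{\beta_f}{2}d_t^2\leq\tfrac12 g_t$, so $h_{t+1}\leq h_t - g_t + \tfrac12 g_t = h_t - \tfrac12 g_t \leq \tfrac12 h_t$. If $\eta^\star<1$, take $\eta=\eta^\star$; the quadratic evaluates to $h_{t+1}\leq h_t - \tfrac{g_t^2}{2\beta_f d_t^2}$, and feeding the gap bound into one factor $g_t/d_t^2$ gives
\[
\frac{g_t^2}{2\beta_f d_t^2} \;=\; \frac{g_t}{2\beta_f}\cdot\frac{g_t}{d_t^2} \;\geq\; \frac{g_t}{2\beta_f}\cdot\frac{\alpha_\mK\Vert\nabla f(x_t)\Vert_*}{4} \;\geq\; \frac{\alpha_\mK\Vert\nabla f(x_t)\Vert_*}{8\beta_f}\,h_t ,
\]
using $g_t\geq h_t$. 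Hence $h_{t+1}\leq\bigl(1 - \tfrac{\alpha_\mK\Vert\nabla f(x_t)\Vert_*}{8\beta_f}\bigr)h_t$, and taking the larger of the two factors proves the lemma.

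The only genuinely delicate step is the gap estimate: one has to choose the inscribed-ball direction $z$ exactly anti-aligned with $\nabla f(x_t)$ so that the curvature of $\mK$ is converted into $\Vert\nabla f(x_t)\Vert_*$, and track the constant $\alpha_\mK/8$ coming from $\gamma(1-\gamma)\alpha_\mK/2$ at $\gamma=\tfrac12$. Everything else is the routine Frank--Wolfe quadratic bookkeeping, with the minor care that $d_t>0$ whenever $h_t>0$.
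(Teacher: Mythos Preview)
Your proof is correct and follows essentially the same route as the paper: both construct the test point $q=\tilde p_t=\tfrac12(x_t+p_t)+\tfrac{\alpha_\mK}{8}d_t^2\,z$ with $z$ anti-aligned to $\nabla f(x_t)$, compare it to $p_t$ via the oracle's optimality, and then do a case split on the step size. The only cosmetic difference is that the paper keeps the mixed bound $(p_t-x_t)\cdot\nabla f(x_t)\le -\tfrac12 h_t-\tfrac{\alpha_\mK}{8}d_t^2\Vert\nabla f(x_t)\Vert_*$ and chooses $\eta_t$ to annihilate the $d_t^2$ term, whereas you solve for the pure gap bound $g_t\ge\tfrac{\alpha_\mK}{4}d_t^2\Vert\nabla f(x_t)\Vert_*$, optimize the quadratic in $\eta$, and then feed the gap bound back in; both paths land on the identical final inequality. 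One small remark: your appeal to \eqref{ie:largegrad} to get $\nabla f(x_t)\neq 0$ is out of scope here since the lemma does not assume strong convexity of $f$, but it is also unnecessary---your very next sentence ($0<h_t\le g_t\le d_t\Vert\nabla f(x_t)\Vert_*$) already forces $\Vert\nabla f(x_t)\Vert_*>0$ and $d_t>0$ on its own.
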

\begin{proof}
By the optimality  of the point $p_t$ we have that
\begin{eqnarray}\label{inq:steplemma_1}
(p_t - x_t)\cdot\nabla{}f(x_t) &\leq & (x^* - x_t)\cdot\nabla{}f(x_t) \nonumber \\
&\leq & f(x^*) - f(x_t) = -h_t ,
\end{eqnarray}
where the second inequality follows from convexity of $f$.
Denote $c_t = \frac{1}{2}(x_t + p_t)$ and $w_t \in \arg\min_{w\in\vecspace,\Vert{w}\Vert\leq 1}w\cdot\nabla{}f(x_t)$.  Note that from Holder's inequality we have that $w_t\cdot\nabla{}f(x_t) = -\Vert{\nabla{}f(x_t)}\Vert_*$.
Using the strong convexity of the set $\mK$ we have that the point
$\tilde{p}_t = c_t + \frac{\alpha_K}{8}\Vert{x_t-p_t}\Vert^2w_t$ is in $\mK$.
Again using the optimality of $p_t$ we have that
\begin{align}\label{ie:errordec}
&(p_t - x_t)\cdot\nabla{}f(x_t) \leq  (\tilde{p}_t - x_t)\cdot\nabla{}f(x_t) \nonumber \\
& = \frac{1}{2}(p_t-x_t)\cdot\nabla{}f(x_t)  + \frac{\alpha_K\Vert{x_t-p_t}\Vert^2}{8}w_t\cdot\nabla{}f(x_t) \nonumber \\
& \leq -\frac{1}{2}h_t - \frac{\alpha_K\Vert{x_t-p_t}\Vert^2}{8}\Vert{\nabla{}f(x_t)}\Vert_* ,
\end{align}
where the last inequality follows from Eq. \eqref{inq:steplemma_1}.

We now analyze the decrease in the approximation error $h_{t+1}$. By smoothness of $f$ we have
\begin{eqnarray*}
f(x_{t+1}) &\leq &f(x_t) + \eta_t(p_t - x_t)\cdot\nabla{}f(x_t) \\
&+& \frac{\beta_f}{2}\eta_t^2\Vert{p_t - x_t}\Vert^2 .
\end{eqnarray*}
Subtracting $f(x^*)$ from both sides we have
\begin{align}\label{ie:errordec2}
h_{t+1} &\leq &h_t + \eta_t(p_t - x_t)\cdot\nabla{}f(x_t) + \frac{\beta_f}{2}\eta_t^2\Vert{p_t - x_t}\Vert^2 .\nonumber \\
\end{align}
Plugging Eq. \eqref{ie:errordec} we have
\begin{eqnarray*}
h_{t+1} &\leq & h_t\left({1-\frac{\eta_t}{2}}\right) - \eta_t\frac{\alpha_K\Vert{x_t-p_t}\Vert^2}{8}\Vert{\nabla{}f(x_t)}\Vert_*  \nonumber \\
& + & \frac{\beta_f}{2}\eta_t^2\Vert{p_t - x_t}\Vert^2 \\
&=& h_t\left({1-\frac{\eta_t}{2}}\right) \\
&+& \frac{\Vert{x_t-p_t}\Vert^2}{2}\left({\eta_t^2\beta_f - \eta_t\frac{\alpha_K\Vert{\nabla{}f(x_t)}\Vert_*}{4}}\right) .
\end{eqnarray*}
In case $\frac{\alpha_K\Vert{\nabla{}f(x_t)}\Vert_*}{4} \geq \beta_f$, by the optimal choice of $\eta_t$ in Algorithm \ref{alg:condgrad}, we can set $\eta_t =1$ and get
\begin{eqnarray*}
h_{t+1} \leq \frac{h_t}{2} .
\end{eqnarray*}

Otherwise, we can set $\eta_t = \frac{\alpha_K\Vert{\nabla{}f(x_t)}\Vert_*}{4\beta_f}$ and get
\begin{eqnarray*}
h_{t+1} &\leq & h_t\left({1-\frac{\alpha_K\Vert{\nabla{}f(x_t)}\Vert_*}{8\beta_f}}\right) .
\end{eqnarray*}
\end{proof}

Note that Lemma \ref{lem:fwineq} only relies on the strong convexity of the set $\mK$ and did not assume anything regrading $f$ beyond convexity and smoothness. In particular it does not require $f$ to be strongly convex.

We can now prove Theorem \ref{thr:newfw}.
\begin{proof}
Let $M =\frac{\sqrt{\alpha_f}\alpha_K}{8\sqrt{2}\beta_f}$ and $C = \max\lbrace{\frac{9}{2}\beta_fD_{\mK}^2, 18M^{-2}}\rbrace$. We prove by induction that for all $t\geq 1$, $h_t \leq \frac{C}{(t+2)^2}$. 

Since we assume that the objective function $f$ satisfies Eq. \eqref{ie:largegrad}, we have from Lemma \ref{lem:fwineq} that on any iteration $t$,
\begin{eqnarray}\label{ie:mainproof}
h_{t+1} &\leq & h_t\cdot\max\lbrace{\frac{1}{2}, 1-\frac{\alpha_K\sqrt{\alpha_f}}{8\sqrt{2}\beta_f}\sqrt{h_t}}\rbrace  \nonumber \\
&=& h_t\cdot\max\lbrace{\frac{1}{2}, 1-Mh_t^{1/2}}\rbrace .
\end{eqnarray}
For the base case $t=1$ we need to prove that $h_1=f(x_1)-f(x^*) \leq C/4$. 
By $\beta_f$ smoothness of $f$ we have
\begin{align*}
f(x_1) - f(x^*) &= f(x_0 + \eta_0(p_0 - x_0)) - f(x^*)\\
&\leq  h_0  + \eta_0(p_0-x_0)\cdot\nabla{}f(x_0) + \frac{\beta_f\eta_0^2}{2}D_{\mK}^2 \\
&\leq h_0(1-\eta_0) + \frac{\beta_f\eta_0^2}{2}D_{\mK}^2 ,
\end{align*}
where the last inequality follows from convexity of $f$. By the optimal choice of $\eta_0$ we can in particular set $\eta_0=1$ which gives $h_1 \leq \frac{\beta_f}{2}D_{\mK}^2 \leq C/9$.

Assume now that the induction holds for time $t\geq 1$, that is $h_t \leq \frac{C}{(t+2)^2}$.

If the result of the $\max$ operation in Eq. \eqref{ie:mainproof} is the first argument, that is $1/2$, we have that 
\begin{eqnarray}\label{ie:mainproof2}
h_{t+1} &\leq & \frac{h_t}{2} \leq \frac{C}{2(t+2)^2} = \frac{C}{(t+3)^2}\cdot\frac{(t+3)^2}{2(t+2)^2} \nonumber \\
& \leq & \frac{C}{(t+3)^2} .
\end{eqnarray}
where the last inequality holds for any $t \geq 1$.

We now turn to the case in which the result of the $\max$ operation in Eq. \eqref{ie:mainproof} is the second argument. We consider two cases.

If $h_t \leq \frac{C}{2(t+2)^2}$ then as in Eq. \eqref{ie:mainproof2} it holds for any $t\geq 1$ that
\begin{eqnarray*}
h_{t+1} \leq h_t \leq \frac{C}{2(t+2)^2} \leq \frac{C}{(t+3)^2} ,
\end{eqnarray*}
where the first inequality follows from Eq. \eqref{ie:mainproof}.

Otherwise,  $h_t > \frac{C}{2(t+2)^2}$. By Eq. \eqref{ie:mainproof} and the induction assumption we have
\begin{align*}
h_{t+1} &\leq  h_t\left({1-Mh_t^{1/2}}\right) \\
&<  \frac{C}{(t+2)^2}\left({1-M\sqrt{\frac{C}{2}}\frac{1}{t+2}}\right)  \\
&= \frac{C}{(t+3)^2}\cdot\frac{(t+3)^2}{(t+2)^2}\left({1-M\sqrt{\frac{C}{2}}\frac{1}{t+2}}\right) \\
&= \frac{C}{(t+3)^2}\cdot\frac{(t+2)^2+2t+5}{(t+2)^2}\left({1-M\sqrt{\frac{C}{2}}\frac{1}{t+2}}\right)  \\
&<  \frac{C}{(t+3)^2}\left({1+\frac{3(t+2)}{(t+2)^2}}\right)\left({1-M\sqrt{\frac{C}{2}}\frac{1}{t+2}}\right)  \\
&=  \frac{C}{(t+3)^2}\left({1+\frac{3}{t+2}}\right)\left({1-M\sqrt{\frac{C}{2}}\frac{1}{t+2}}\right) .
\end{align*}
Thus for $C \geq \frac{18}{M^2}$ we have that
\begin{align*}
h_{t+1} &\leq  \frac{C}{(t+3)^2}\left({1+\frac{3}{t+2}}\right)\left({1-\frac{3}{t+2}}\right)\\
& < \frac{C}{(t+3)^2} .
\end{align*}

\end{proof}

\section{Derivation of Previous Fast Rates Results and Extensions}\label{sec:prevresults}

\subsection{Deriving the Linear Rate of Polayk \& Levitin}
Polyak \& Levitin considered in \cite{Polyak} the case in which the feasible set is strongly convex, the objective function is smooth and there exists a constant $g > 0$ such that 
\begin{eqnarray}\label{ie:polyak}
\forall{x\in\mK}: \quad \Vert{\nabla{}f(x)}\Vert_* \geq g .
\end{eqnarray}
They showed that under the lower-bounded gradient assumption, Algorithm \ref{alg:condgrad} converges with a linear rate, that is $e^{-\Theta(t)}$. 
Clearly by plugging Eq. \eqref{ie:polyak} into Lemma \ref{lem:fwineq} we have that on each iteration $t$
\begin{eqnarray*}
h_{t+1} \leq h_t\cdot\max\lbrace{\frac{1}{2}, 1- \frac{\alpha_kg}{8\beta_f}}\rbrace .
\end{eqnarray*}
which results in the same exponentially fast convergence rate as in \cite{Polyak} and following works such as \cite{Demyanov70, Dunn79}.

\subsection{Deriving a Linear Rate for Arbitrary Convex Sets in case $x^*$ is in the Interior of the Set}
Assume now that the feasible set $\mK$ is convex but not necessarily strongly convex. We assume that the objective function $f$ is smooth, convex, satisfies Eq. \eqref{ie:largegrad} with some constant $\alpha_f$ and admits a minimizer (not necessarily unique) $x^*$ that lies in the interior of $\mK$, i.e. there exists a parameter $r>0$ such that the ball of radius $r$ with respect to norm $\Vert\cdot\Vert$ centered at $x^*$ is fully contained in $\mK$ \footnote{We assume here that $\mK$ is full-dimensional. In any other case, we can assume instead that the intersection of the ball centered at $x^*$ with the smallest subspace containing $\mK$ is fully contained in $\mK$. In this case we also need to replace the gradient $\nabla{}f(x)$ with its projection onto this subspace, see also footnote 1.}. Gu{\'{e}}Lat and Marcotte \cite{GueLat1986} showed the under the above conditions, the Frank-Wolfe algorithm converges with a linear rate. We now show how a slight modification in the proof of Lemma \ref{lem:fwineq} yields this linear convergence result.

Let $w_t$ be as in the proof of Lemma \ref{lem:fwineq}, that is $w_t \in \arg\min_{w\in\vecspace, \Vert{w}\Vert \leq 1}w\cdot{}\nabla{}f(x_t)$. Instead of defining the point $\tilde{p}_t$ as in the proof of Lemma \ref{lem:fwineq} we define it to be $\tilde{p}_t = x^* + rw_t$. Because of our assumption on the location of $x^*$, it holds that $\tilde{p}_t\in\mK$. We thus have that
\begin{align*}
(\tilde{p}_t-x_t)\cdot\nabla{}f(x_t) &= (x^*_t-x_t)\cdot\nabla{}f(x_t) + rw_t\cdot\nabla{}f(x_t) \\
&\leq   - r\Vert{\nabla{}f(x_t)}\Vert_*  .
\end{align*}
Plugging this into Eq. \eqref{ie:errordec2} we have
\begin{eqnarray*}
h_{t+1} &\leq & h_t - \eta_tr\Vert{\nabla{}f(x_t)}\Vert_* + \frac{\beta_f\eta_t^2D_{\mK}^2}{2} \\
& \leq & h_t - \eta_tr\sqrt{\frac{\alpha_f}{2}}\sqrt{h_t} + \frac{\beta_f\eta_t^2D_{\mK}^2}{2} .
\end{eqnarray*}
where $D_{\mK}$ denotes the diameter of $\mK$ with respect to norm $\Vert\cdot\Vert$ and the second inequality follows from Eq. \eqref{ie:largegrad}.
By the optimal choice of $\eta_t$, we can set $\eta_t = \frac{r\sqrt{\alpha_f}\sqrt{h_t}}{\sqrt{2}\beta_fD_{\mK}^2}$ and get
\begin{eqnarray*}
h_{t+1} \leq h_t - \frac{r^2\alpha_f}{4\beta_fD_{\mK}^2}h_t ,
\end{eqnarray*}
which results in a linear convergence result.

\subsection{Relaxing the Strong Convexity of $f$}\label{sec:relax}
So far we have considered the case in which the objective function $f$ is strongly convex. Notice however that our main instrument for proving the accelerated convergence rate, i.e. Lemma \ref{lem:fwineq}, did not rely directly on strong convexity of $f$, but on the magnitude of the gradient, $\Vert{\nabla{}f(x_t)}\Vert_*$. We have seen in Eq. \eqref{ie:largegrad} that indeed if $f$ is strongly convex than the gradient is at least of the order of $\sqrt{h_t}$. We now show that there exists functions which are not strongly convex but still satisfy Eq. \eqref{ie:largegrad} and hence our results apply also for them. 

Consider the function
\begin{eqnarray*}
f(x) = \frac{1}{2}\Vert{Ax - b}\Vert_2^2 .
\end{eqnarray*}
where $x\in\Rn$, $A\in\Rmn$, $b\in\mathbb{R}^m$. Assume that $m<n$ and all rows of $A$ are linearly independent. In this case the optimization problem $\min_{x\in\mK}f(x)$ is the problem of finding a point in $\mK$ that best satisfies an under-determined linear system in terms of the mean square error. An application of the Frank-Wolfe method to this problem was studied in \cite{BeckTaboule}.
Under these assumptions, the function $f$ is smooth and convex but not strongly convex since the Hessian matrix given by $A^{\top}A$ is not positive definite (note however that the matrix $AA^{\top}$ is positive definite). 

The gradient of $f$ is given by 
\begin{eqnarray*}
\nabla{}f(x) = A^{\top}(Ax-b) .
\end{eqnarray*}
Thus we have that
\begin{align*}
\Vert{\nabla{}f(x)}\Vert_2^2 &= [A^{\top}(Ax-b)]^{\top}[A^{\top}(Ax-b)] \\
&\geq  \lambda_{\min}(AA^{\top})\Vert{Ax-b}\Vert_2^2 \\
&\geq   2\lambda_{\min}(AA^{\top})\Big(\frac{1}{2}\Vert{Ax-b}\Vert_2^2 \\
&- \frac{1}{2}\Vert{Ax^*-b}\Vert_2^2\Big) ,
\end{align*}
where $\lambda_{\min}(AA^{\top})$ denotes the smallest eigenvalue of $AA^{\top}$.
Since $AA^{\top}$ is positive definite, $\lambda_{\min}(AA^{\top}) > 0$ and it follows that $f$ satisfies Eq. \eqref{ie:largegrad}. 

Combining the result of this subsection with the previous one yields the linear convergence rate of the Frank-Wolfe method applied to the convex feasibility problem studied in \cite{BeckTaboule}.

\section{Examples of Strongly Convex Sets}\label{sec:sets}

In this section we explore convex sets for which Theorem \ref{thr:newfw} is applicable. That is, convex sets which on one hand are strongly convex, and on the other, admit a simple and efficient implementation of a linear optimization oracle. We show that various norms that give rise to natural regularization functions in machine learning, induce convex sets that fit both of the above requirements. A summary of our findings is given in Table \ref{table:sets}. We note that in all cases in which the norm parameter $p$ is smaller than $2$ (or one of the parameters $s,p$ in case of group norms), we are not aware of a practical algorithm for computing the projection.

\begin{table*}\label{table:sets}
\begin{center}
  \begin{tabular}{| c | c | c | c | c |}
    \hline
    $\vecspace$ & Domain name & Domain expression & S.C. parameter & Complexity of lin. opt.\\ \hline
    $\Rn$ & $\ell_p$ ball,  $p\in(1,2]$ & $\lbrace{x\in\Rn \, | \, \Vert{x}\Vert_p \leq r}\rbrace$ & $\frac{p-1}{r}$ & $O(\nnz)$ \\ \hline
    $\Rmn$ & Schatten $\ell_p$ ball, $p\in(1,2]$ & $\lbrace{X\in\Rmn \, | \, \Vert{\sigma(X)}\Vert_p \leq r}\rbrace$ & $\frac{p-1}{r}$ & $O(n^3)$ (SVD) \\ \hline
    $\Rmn$ & Group $\ell_{s,p}$ ball, $s,p\in(1,2]$ & $\lbrace{X\in\Rmn \, | \, \Vert{X}\Vert_{s,p} \leq r}\rbrace$& $\frac{(s-1)(p-1)}{(s+p-2)r}$ & $O(\nnz)$ \\ \hline
  \end{tabular}
  \caption{Examples of strongly convex sets with corresponding strong convexity parameter and complexity of a linear optimization oracle implementation . $\nnz$ denotes the number of non-zero entries in the linear objective and $\sigma(X)$ denotes the vector of singular values.}
\end{center}
\end{table*}

\subsection{Partial Characterization of Strongly Convex Sets}

The following lemma is taken from \cite{Nesterov10} (Theorem 12).
\begin{lemma}
Let $\vecspace$ be a finite vector space and let $f:\vecspace\rightarrow\mathbb{R}$ be non-negative, $\alpha$-strongly convex and $\beta$-smooth. Then the set $\mK = \lbrace{x \, | \, f(x) \leq r}\rbrace$ is $\frac{\alpha}{\sqrt{2\beta{}r}}$-strongly convex.
\end{lemma}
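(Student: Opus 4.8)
The plan is to show directly that the sublevel set $\mK=\{x\,|\,f(x)\le r\}$ satisfies Definition~\ref{def:strongconvexset} with parameter $\alpha_{\mK}=\frac{\alpha}{\sqrt{2\beta r}}$. Fix $x,y\in\mK$, $\gamma\in[0,1]$, and a unit vector $z$ with $\Vert z\Vert=1$; write $c = \gamma x + (1-\gamma)y$. I must exhibit that $c + \gamma(1-\gamma)\frac{\alpha_{\mK}}{2}\Vert x-y\Vert^2 z \in \mK$, i.e.\ that $f$ evaluated at this perturbed point is at most $r$. First I would use $\beta$-smoothness (Definition~\ref{def:smoothfunc}) to bound the value at the perturbed point: for any radius $\rho\ge 0$,
\begin{eqnarray*}
f(c+\rho z) \le f(c) + \rho\,\nabla f(c)\cdot z + \frac{\beta}{2}\rho^2 .
\end{eqnarray*}
Next I would control $f(c)$ from above using the second (midpoint-type) form of $\alpha$-strong convexity from Definition~\ref{def:strongconvexfunc}:
\begin{eqnarray*}
f(c) \le \gamma f(x) + (1-\gamma)f(y) - \frac{\alpha}{2}\gamma(1-\gamma)\Vert x-y\Vert^2 \le r - \frac{\alpha}{2}\gamma(1-\gamma)\Vert x-y\Vert^2 ,
\end{eqnarray*}
where the last step uses $f(x),f(y)\le r$. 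The linear term $\rho\,\nabla f(c)\cdot z$ is the only term with an unfavorable sign, and the key obstacle is to bound it; I would do this by first bounding $\Vert\nabla f(c)\Vert_*$ in terms of $f(c)$ (hence in terms of $r$) and then applying Hölder's inequality $\nabla f(c)\cdot z \le \Vert\nabla f(c)\Vert_*\Vert z\Vert = \Vert\nabla f(c)\Vert_*$.

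To bound the gradient: since $f$ is non-negative and $\beta$-smooth, a standard consequence of smoothness (self-bounding property) gives $\Vert\nabla f(c)\Vert_*^2 \le 2\beta f(c) \le 2\beta r$; one derives this by applying the smoothness inequality at the point $c - \frac{1}{\beta}\Vert\nabla f(c)\Vert_* \, u$ where $u$ is a unit vector dual-aligned with $\nabla f(c)$ (so that $\nabla f(c)\cdot u = \Vert\nabla f(c)\Vert_*$), yielding $0 \le f(\text{that point}) \le f(c) - \frac{1}{2\beta}\Vert\nabla f(c)\Vert_*^2$. Thus $\nabla f(c)\cdot z \le \sqrt{2\beta r}$. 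Substituting everything, with $\rho = \gamma(1-\gamma)\frac{\alpha_{\mK}}{2}\Vert x-y\Vert^2$ and writing $s = \gamma(1-\gamma)\Vert x-y\Vert^2$ for brevity, we get
\begin{eqnarray*}
f(c+\rho z) \le r - \frac{\alpha}{2}s + \frac{\alpha_{\mK}}{2}s\sqrt{2\beta r} + \frac{\beta}{2}\Big(\frac{\alpha_{\mK}}{2}s\Big)^2 .
\end{eqnarray*}

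It remains to check that the negative term $-\frac{\alpha}{2}s$ dominates the two positive correction terms, i.e.\ that $\frac{\alpha_{\mK}}{2}s\sqrt{2\beta r} + \frac{\beta}{8}\alpha_{\mK}^2 s^2 \le \frac{\alpha}{2}s$. Dividing by $s/2$, this is $\alpha_{\mK}\sqrt{2\beta r} + \frac{\beta}{4}\alpha_{\mK}^2 s \le \alpha$. With the choice $\alpha_{\mK}=\frac{\alpha}{\sqrt{2\beta r}}$ the first term equals $\alpha$, so the inequality as stated would fail by the slack of the quadratic term — which suggests the clean statement actually needs $s$ to be bounded (e.g.\ $s \le D_{\mK}^2$, and the diameter of a sublevel set of a strongly convex function is itself controlled by $r,\alpha$), or that one should track the first-order term more carefully by evaluating $\nabla f(c)\cdot z$ against the \emph{decrease} already secured. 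The cleanest fix, which I expect is the intended one, is to instead bound $f(c+\rho z)$ by choosing $\rho$ slightly more conservatively, or to absorb the quadratic term by noting $\frac{\beta}{8}\alpha_{\mK}^2 s \le \frac{\beta}{8}\alpha_{\mK}^2 \cdot \frac{4r}{\alpha}$ and folding the resulting constant into a mild adjustment of $\alpha_{\mK}$ by a universal constant; since the lemma is quoted from \cite{Nesterov10}, I would follow its bookkeeping, but the structural heart of the argument is exactly the three-term estimate above. The main obstacle, then, is not the idea but making the constants close — specifically handling the interplay between the first-order term $\sqrt{2\beta r}$ and the target $\alpha/\alpha_{\mK}$ so that no diameter assumption is needed.
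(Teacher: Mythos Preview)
The paper itself does not prove this lemma; it simply cites \cite{Nesterov10} (Theorem~12). So there is no ``paper's own proof'' to compare against. That said, the closely related Lemma~\ref{lem:strongconvexset} \emph{is} proved in the appendix, and its structure reveals exactly what is missing from your argument.

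Your three-term estimate is the right skeleton, but the bookkeeping does not close, and the fixes you suggest (absorbing a universal constant, invoking a diameter bound) are not what is needed. The genuine gap is that you discard information too early: you bound $\Vert\nabla f(c)\Vert_* \le \sqrt{2\beta r}$ using only $f(c)\le r$, whereas you have already established the stronger bound $f(c)\le r-\frac{\alpha}{2}s$. Keeping the sharper self-bounding estimate $\Vert\nabla f(c)\Vert_* \le \sqrt{2\beta f(c)}$, your smoothness inequality becomes
\begin{eqnarray*}
f(c+\rho z) \le f(c) + \rho\sqrt{2\beta f(c)} + \frac{\beta}{2}\rho^2 = \Big(\sqrt{f(c)} + \sqrt{\tfrac{\beta}{2}}\,\rho\Big)^2 ,
\end{eqnarray*}
a perfect square. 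Now the quadratic term is no longer an obstacle at all. Using concavity of $\sqrt{\cdot}$ (exactly as in the appendix proof of Lemma~\ref{lem:strongconvexset}),
\begin{eqnarray*}
\sqrt{f(c)} \le \sqrt{r - \tfrac{\alpha}{2}s} \le \sqrt{r} - \frac{\alpha s}{4\sqrt{r}} ,
\end{eqnarray*}
so it suffices that $\sqrt{\beta/2}\,\rho \le \frac{\alpha s}{4\sqrt{r}}$, i.e.\ $\rho \le \frac{\alpha}{2\sqrt{2\beta r}}\,s$. This is precisely $\rho = \frac{\alpha_{\mK}}{2}s$ with $\alpha_{\mK}=\frac{\alpha}{\sqrt{2\beta r}}$, and the constants match exactly with no diameter assumption and no slack to absorb.

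In short: the missing idea is to recognize that the smoothness upper bound and the gradient self-bound together complete a square, so the ``unfavorable'' linear term and the quadratic term should be handled jointly, not separately. Your aside about ``tracking the first-order term against the decrease already secured'' was pointing in the right direction; carrying it through via the perfect-square identity is what makes the constant $\frac{\alpha}{\sqrt{2\beta r}}$ come out clean.
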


This lemma for instance shows that the Euclidean ball of radius $r$ is $1/r$-strongly convex (by applying the lemma with $f=\Vert{x}\Vert_2^2$).

The following lemma will be useful to prove that convex sets that are induced by certain norms, which do not correspond to a smooth function as in the previous lemma, are strongly convex. The proof is given in the appendix.

\begin{lemma}\label{lem:strongconvexset}
Let $\vecspace$ be a finite vector space, let $\Vert{\cdot}\Vert$ be a norm over $\vecspace$ and assume that the function $f(x) = \Vert{x}\Vert^2$ is $\alpha$-strongly convex over $\vecspace$ with respect to the norm $\Vert{\cdot}\Vert$. Then for any $r > 0$, the set $\ball_{\Vert\cdot\Vert}(r) = \lbrace{x\in\vecspace \, | \, \Vert{x}\Vert \leq r}\rbrace$ is $\frac{\alpha}{2r}$-strongly convex with respect to $\Vert\cdot\Vert$.
\end{lemma}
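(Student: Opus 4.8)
The goal is to show that $\ball_{\Vert\cdot\Vert}(r)$ is $\frac{\alpha}{2r}$-strongly convex, meaning that for all $x,y$ in the ball, all $\gamma\in[0,1]$, and all unit vectors $z$, the point $\gamma x + (1-\gamma)y + \gamma(1-\gamma)\frac{\alpha}{4r}\Vert x-y\Vert^2 z$ lies in the ball (using strong-convexity parameter $\frac{\alpha}{2r}$, so the radius coefficient is $\frac{1}{2}\cdot\frac{\alpha}{2r} = \frac{\alpha}{4r}$).

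The plan is to work directly with the defining inequality of the ball. First I would bound $\Vert \gamma x + (1-\gamma)y + \gamma(1-\gamma)\frac{\alpha}{4r}\Vert x-y\Vert^2 z\Vert$ using the triangle inequality, getting $\Vert\gamma x + (1-\gamma)y\Vert + \gamma(1-\gamma)\frac{\alpha}{4r}\Vert x-y\Vert^2$ since $\Vert z\Vert = 1$. So it suffices to show
\begin{eqnarray*}
\Vert\gamma x + (1-\gamma)y\Vert + \gamma(1-\gamma)\frac{\alpha}{4r}\Vert x-y\Vert^2 \leq r .
\end{eqnarray*}
The key is to get a good upper bound on $\Vert\gamma x + (1-\gamma)y\Vert$ that beats the trivial convexity bound by an amount proportional to $\gamma(1-\gamma)\Vert x-y\Vert^2$. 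This is exactly where the strong convexity of $f(x) = \Vert x\Vert^2$ comes in.

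Next I would apply part 2 of the strong convexity definition (Definition \ref{def:strongconvexfunc}) to $f(x) = \Vert x\Vert^2$:
\begin{eqnarray*}
\Vert\gamma x + (1-\gamma)y\Vert^2 \leq \gamma\Vert x\Vert^2 + (1-\gamma)\Vert y\Vert^2 - \frac{\alpha}{2}\gamma(1-\gamma)\Vert x-y\Vert^2 \leq r^2 - \frac{\alpha}{2}\gamma(1-\gamma)\Vert x-y\Vert^2 ,
\end{eqnarray*}
using $\Vert x\Vert, \Vert y\Vert \leq r$. Taking square roots gives $\Vert\gamma x+(1-\gamma)y\Vert \leq \sqrt{r^2 - \frac{\alpha}{2}\gamma(1-\gamma)\Vert x-y\Vert^2}$. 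Then I would use the elementary inequality $\sqrt{r^2 - a} \leq r - \frac{a}{2r}$ (valid for $0 \leq a \leq r^2$, which follows from concavity of $\sqrt{\cdot}$ or from squaring) with $a = \frac{\alpha}{2}\gamma(1-\gamma)\Vert x-y\Vert^2$ to conclude
\begin{eqnarray*}
\Vert\gamma x+(1-\gamma)y\Vert \leq r - \frac{\alpha\gamma(1-\gamma)\Vert x-y\Vert^2}{4r} .
\end{eqnarray*}
Combining this with the triangle-inequality bound from the first step, the two correction terms cancel exactly and we get $\leq r$, which is what we needed.

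The main obstacle, though a minor one, is making sure the elementary bound $\sqrt{r^2-a}\le r-\frac{a}{2r}$ is applied in the valid range, i.e. checking that $a = \frac{\alpha}{2}\gamma(1-\gamma)\Vert x-y\Vert^2 \leq r^2$; this is guaranteed because $\Vert \gamma x + (1-\gamma) y\Vert^2 \geq 0$ forces $r^2 - a \geq 0$ already from the strong convexity inequality itself, so no separate argument is needed. A secondary point worth a sentence is to note why $f(x)=\Vert x\Vert^2$ being $\alpha$-strongly convex over all of $\vecspace$ (not just over $\ball_{\Vert\cdot\Vert}(r)$) is what lets us invoke the inequality for arbitrary $x,y$ in the ball — but since that is exactly the hypothesis, this is immediate. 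Everything else is a short chain of triangle inequality, the strong-convexity inequality, and the scalar estimate.
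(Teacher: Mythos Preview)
Your proposal is correct and follows essentially the same argument as the paper: triangle inequality, then the strong-convexity inequality for $f(x)=\Vert x\Vert^2$ to get $\Vert\gamma x+(1-\gamma)y\Vert^2\le r^2-\frac{\alpha}{2}\gamma(1-\gamma)\Vert x-y\Vert^2$, then the concavity bound $\sqrt{r^2-a}\le r-\frac{a}{2r}$, and finally the cancellation. The only cosmetic difference is that the paper phrases things in terms of $f$ and squares at the end, whereas you work directly with the norm; the content is identical.
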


\subsection{$\ell_p$ Balls for $p\in(1,2]$}

Given a parameter $p\geq 1$, consider the $\ell_p$ ball of radius $r$,
\begin{eqnarray*}
\ball_p(r) = \lbrace{x\in\mathbb{R}^n \, | \, \Vert{x}\Vert_p \leq r}\rbrace .
\end{eqnarray*}

The following lemma is proved in \cite{ShayShalev07}.

\begin{lemma}\label{lem:pnorm_sc}
Fix $p\in(1,2]$. The function $\frac{1}{2}\Vert{x}\Vert_p^2$ is $(p-1)$-strongly convex w.r.t. the norm $\Vert{\cdot}\Vert_p$.
\end{lemma}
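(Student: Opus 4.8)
The plan is to reduce the claim to a pointwise lower bound on the Hessian of $f(x)=\tfrac12\Vert{x}\Vert_p^2$ and then verify that bound by an explicit computation together with a single application of H\"older's inequality. Recall the standard fact that a function which is $C^1$ on $\mathbb{R}^n$ and $C^2$ off a set meeting each line segment in finitely many points is $\sigma$-strongly convex with respect to a norm $\Vert\cdot\Vert$ as soon as $h^\top\nabla^2 f(x)h\geq\sigma\Vert{h}\Vert^2$ holds wherever the Hessian is defined: restricting to a segment gives a one-variable $C^1$ function whose second derivative is piecewise bounded below by $\sigma\Vert{h}\Vert^2$, which forces $s\mapsto f(x+sh)-\tfrac{\sigma}{2}\Vert{h}\Vert^2 s^2$ to be convex, i.e.\ exactly Definition \ref{def:strongconvexfunc}(part 2) with constant $\sigma$. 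For $p>1$ our $f$ is $C^1$ on all of $\mathbb{R}^n$ and $C^2$ on the open dense set $U$ where all coordinates are nonzero, and a segment meets each coordinate hyperplane in at most one point, so it suffices to prove $h^\top\nabla^2 f(x)h\geq(p-1)\Vert{h}\Vert_p^2$ for $x\in U$ and all $h\in\mathbb{R}^n$.

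For the computation, write $\Phi(x)=\Vert{x}\Vert_p$, so $f=\tfrac12\Phi^2$ and $\nabla^2 f=\nabla\Phi\,\nabla\Phi^\top+\Phi\,\nabla^2\Phi$. Setting $\phi_i=|x_i|^{p-1}\mathrm{sign}(x_i)$, differentiating $\Phi=(\sum_i|x_i|^p)^{1/p}$ gives $\partial_i\Phi=\Phi^{1-p}\phi_i$ and $\partial_i\partial_j\Phi=(1-p)\Phi^{1-2p}\phi_i\phi_j+(p-1)\Phi^{1-p}\delta_{ij}|x_i|^{p-2}$. Substituting these and writing $A=\sum_i\phi_i h_i$, the rank-one cross terms combine and one obtains the identity
\begin{eqnarray*}
h^\top\nabla^2 f(x)\,h &=& (2-p)\,\Phi^{2-2p}A^2 + (p-1)\,\Phi^{2-p}\sum_{i}|x_i|^{p-2}h_i^2 .
\end{eqnarray*}
Since $p\in(1,2]$ we have $2-p\geq0$, so the first term is nonnegative and $h^\top\nabla^2 f(x)h\geq(p-1)\,\Phi^{2-p}\sum_i|x_i|^{p-2}h_i^2$.

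It remains to show $\Phi^{2-p}\sum_i|x_i|^{p-2}h_i^2\geq\Vert{h}\Vert_p^2$, equivalently $\big(\sum_i|x_i|^p\big)^{(2-p)/p}\big(\sum_i|x_i|^{p-2}h_i^2\big)\geq\big(\sum_i|h_i|^p\big)^{2/p}$. This follows from H\"older's inequality with the conjugate exponents $\tfrac{2}{p}$ and $\tfrac{2}{2-p}$ applied to the pointwise factorization $|h_i|^p=\big(|x_i|^{p-2}h_i^2\big)^{p/2}\big(|x_i|^p\big)^{(2-p)/2}$, in which the powers of $|x_i|$ telescope to zero. Combining the three displays yields $h^\top\nabla^2 f(x)h\geq(p-1)\Vert{h}\Vert_p^2$ on $U$, and the reduction in the first paragraph upgrades this to $(p-1)$-strong convexity of $f$ on all of $\mathbb{R}^n$. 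The only real care needed is the non-smoothness of $\Vert\cdot\Vert_p$ on the coordinate hyperplanes when $p<2$ (there $|x_i|^{p-2}$ blows up), which is exactly why the estimate is carried out on $U$ and then transported to the whole space via the piecewise-$C^2$ remark; an alternative that sidesteps this is to dualize, since $\tfrac12\Vert\cdot\Vert_p^2$ being $(p-1)$-strongly convex w.r.t.\ $\Vert\cdot\Vert_p$ is equivalent to $\tfrac12\Vert\cdot\Vert_q^2$ being $\tfrac1{p-1}$-smooth w.r.t.\ $\Vert\cdot\Vert_q$ for the conjugate exponent $q\geq2$, where the analogous Hessian computation is nonsingular.
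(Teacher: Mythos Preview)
The paper does not give its own proof of this lemma; it simply cites \cite{ShayShalev07}. Your proposal therefore supplies strictly more than the paper does, and the route you take --- compute the Hessian of $\tfrac12\Vert\cdot\Vert_p^2$ explicitly, drop the nonnegative $(2-p)$ term, and close with H\"older using exponents $\tfrac{2}{p}$ and $\tfrac{2}{2-p}$ --- is the standard one and is carried out correctly. The Hessian identity and the H\"older step are both right.

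One small technical point in your reduction paragraph: the assertion that ``a segment meets each coordinate hyperplane in at most one point'' is false when the segment lies inside a coordinate hyperplane, i.e.\ when some $h_i=0$ and the corresponding endpoint coordinate vanishes. In that case your set $U$ misses the entire segment. This is easy to patch --- either note that when $h_i=0$ the $i$-th row and column of $\nabla^2 f$ are never touched by $h^\top\nabla^2 f\,h$, so the offending $\vert x_i\vert^{p-2}$ term is irrelevant and $g''(s)$ is still well-defined almost everywhere along the segment; or simply invoke density, proving the strong-convexity inequality first for $x,y\in U$ with $x-y$ having no zero entries and then passing to the limit by continuity. Either fix is routine, but as written the sentence is not literally correct. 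The dual alternative you mention at the end (proving $\tfrac1{p-1}$-smoothness of $\tfrac12\Vert\cdot\Vert_q^2$ for $q\geq 2$) does indeed sidestep this entirely, since for $q\geq 2$ the function is $C^2$ everywhere.
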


The following corollary is a consequence of combining Lemma \ref{lem:pnorm_sc} and Lemma \ref{lem:strongconvexset}. The proof is given in the appendix
\begin{corollary}\label{cor:Lpball_sc}
Fix $p\in(1,2]$. The set $\ball_p(r)$ is $\frac{p-1}{r}$-strongly convex with respect to the norm $\Vert\cdot\Vert_p$ and $\frac{(p-1)n^{\frac{1}{2}-\frac{1}{p}}}{r}$-strongly convex with respect to the norm $\Vert\cdot\Vert_2$.
\end{corollary}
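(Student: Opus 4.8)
The plan is to obtain both claims from Lemma \ref{lem:strongconvexset}, fed by the two ingredients already available in the excerpt: Lemma \ref{lem:pnorm_sc} on the strong convexity of $\frac12\|\cdot\|_p^2$, and the elementary norm equivalences between $\|\cdot\|_p$ and $\|\cdot\|_2$ on $\mathbb{R}^n$.

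First I would establish strong convexity with respect to $\|\cdot\|_p$. By Lemma \ref{lem:pnorm_sc} the function $\frac12\|x\|_p^2$ is $(p-1)$-strongly convex w.r.t. $\|\cdot\|_p$, hence $\|x\|_p^2$ is $2(p-1)$-strongly convex w.r.t. $\|\cdot\|_p$. The hypothesis of Lemma \ref{lem:strongconvexset} is exactly that $\|x\|^2$ is $\alpha$-strongly convex, so plugging in $\alpha = 2(p-1)$ gives that $\ball_p(r) = \ball_{\|\cdot\|_p}(r)$ is $\frac{2(p-1)}{2r} = \frac{p-1}{r}$-strongly convex w.r.t. $\|\cdot\|_p$, which is the first assertion.

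To pass from $\|\cdot\|_p$ to $\|\cdot\|_2$ I would use the standard bounds $\|v\|_2 \le \|v\|_p \le n^{1/p-1/2}\|v\|_2$, valid for all $v\in\mathbb{R}^n$ and $p\in(1,2]$, together with the geometric reading of Definition \ref{def:strongconvexset}: a set is $\alpha$-strongly convex w.r.t. $\|\cdot\|$ iff for all $x,y\in\mK$ and $\gamma\in[0,1]$ it contains the $\|\cdot\|$-ball of radius $\gamma(1-\gamma)\frac{\alpha}{2}\|x-y\|^2$ centered at $c := \gamma x + (1-\gamma)y$. Fix $x,y\in\ball_p(r)$, $\gamma\in[0,1]$, and $z$ with $\|z\|_2=1$; put $z' = z/\|z\|_p$, a unit vector for $\|\cdot\|_p$. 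By the first part, $c + \lambda z' \in \ball_p(r)$ with $\lambda := \gamma(1-\gamma)\frac{p-1}{2r}\|x-y\|_p^2$, and since $\ball_p(r)$ is convex and contains $c$, it contains the whole segment $\{c + t z' : t\in[0,\lambda]\}$. Writing the target point as $c + \gamma(1-\gamma)\tfrac{\alpha_2}{2}\|x-y\|_2^2\,z = c + t z'$ with $t = \gamma(1-\gamma)\tfrac{\alpha_2}{2}\|x-y\|_2^2\,\|z\|_p$, it remains to check $t\le\lambda$ for $\alpha_2 = \frac{(p-1)n^{1/2-1/p}}{r}$: using $\|z\|_p\le n^{1/p-1/2}$ and $\|x-y\|_2^2\le\|x-y\|_p^2$ one gets $t \le \gamma(1-\gamma)\tfrac{\alpha_2 n^{1/p-1/2}}{2}\|x-y\|_p^2 = \gamma(1-\gamma)\tfrac{p-1}{2r}\|x-y\|_p^2 = \lambda$, since $\alpha_2 n^{1/p-1/2} = \frac{p-1}{r}$. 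Hence the target point lies on the segment and so in $\ball_p(r)$, giving $\alpha_2$-strong convexity w.r.t. $\|\cdot\|_2$.

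The only delicate point is applying the two norm inequalities in the right directions: $\|z\|_p\le n^{1/p-1/2}$ inflates the radius we must accommodate, while $\|x-y\|_2\le\|x-y\|_p$ works in our favor, and the factor $n^{1/p-1/2}$ lost in the first bound is exactly what produces the $n^{1/2-1/p}$ in $\alpha_2$. Everything else is routine bookkeeping. (Equivalently, one can isolate a general lemma — if $\mK$ is $\alpha$-strongly convex w.r.t. $\|\cdot\|_a$ and $\|\cdot\|_b\le C_1\|\cdot\|_a$, $\|\cdot\|_a\le C_2\|\cdot\|_b$, then $\mK$ is $\frac{\alpha}{C_1^2 C_2}$-strongly convex w.r.t. $\|\cdot\|_b$ — and apply it with $a=p$, $b=2$, $C_1=1$, $C_2 = n^{1/p-1/2}$.)
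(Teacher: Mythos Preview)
Your proof is correct and follows essentially the same route as the paper: the first claim is an immediate application of Lemma~\ref{lem:strongconvexset} fed by Lemma~\ref{lem:pnorm_sc}, and the second claim is obtained from the first via the norm equivalences $\|v\|_2 \le \|v\|_p \le n^{1/p-1/2}\|v\|_2$, bounding the $\|\cdot\|_p$-norm of the desired $\|\cdot\|_2$-perturbation by the radius already guaranteed by part one. Your detour through $z' = z/\|z\|_p$ and the segment argument is equivalent to (and slightly more explicit than) the paper's direct computation of $\|\cdot\|_p$ of the perturbation; the closing general transfer lemma with constants $C_1, C_2$ is a nice addition not stated in the paper.
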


The following lemma establishes that linear optimization over $\ball_p(r)$ admits a simple closed-form solution that can be computed in time that is linear in the number of non-zeros in the linear objective. The proof is given in the appendix.

\begin{lemma}\label{lem:pball_linopt}
Fix $p\in(1,2]$, $r>0$ and a linear objective $c\in\mathbb{R}^n$. Let $x\in\mathbb{R}^n$ such that $x_i = -\frac{r}{\Vert{c}\Vert_q^{q-1}}\textrm{sgn}(c_i)\vert{c_i}\vert^{q-1}$ where $q$ satisfies: $1/q + 1/p =1$, and $\textrm{sgn}(\cdot)$ is the sign function. Then $x=\arg\min_{y\in\ball_p(r)}y\cdot{}c$
\end{lemma}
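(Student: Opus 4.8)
The plan is to solve the constrained optimization problem $\min_{y \in \ball_p(r)} y \cdot c$ directly using Hölder's inequality, which is the natural tool since $\|\cdot\|_p$ and $\|\cdot\|_q$ are dual norms. First I would establish the lower bound: for any $y$ with $\|y\|_p \le r$, Hölder gives $y \cdot c \ge -|y \cdot c| \ge -\|y\|_p \|c\|_q \ge -r\|c\|_q$. So $-r\|c\|_q$ is a lower bound on the objective over the feasible set, and it suffices to verify that the proposed point $x$ is feasible and attains this value.

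Next I would check feasibility of $x$, i.e. $\|x\|_p \le r$ (in fact with equality). Writing $x_i = -\frac{r}{\|c\|_q^{q-1}}\,\textrm{sgn}(c_i)|c_i|^{q-1}$, we have $|x_i|^p = \frac{r^p}{\|c\|_q^{p(q-1)}}|c_i|^{p(q-1)}$. Using the conjugacy relation $1/p + 1/q = 1$, one gets $p(q-1) = q$ and $p(q-1) \cdot \frac{1}{p}$-type bookkeeping, so $\sum_i |x_i|^p = \frac{r^p}{\|c\|_q^{q}} \sum_i |c_i|^q = \frac{r^p}{\|c\|_q^q}\|c\|_q^q = r^p$, hence $\|x\|_p = r$. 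Then I would compute the objective value: $x \cdot c = \sum_i x_i c_i = -\frac{r}{\|c\|_q^{q-1}}\sum_i \textrm{sgn}(c_i)|c_i|^{q-1} c_i = -\frac{r}{\|c\|_q^{q-1}}\sum_i |c_i|^{q} = -\frac{r}{\|c\|_q^{q-1}}\|c\|_q^q = -r\|c\|_q$, matching the lower bound. This proves $x \in \arg\min_{y \in \ball_p(r)} y\cdot c$.

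A couple of edge cases deserve a remark: if $c = 0$ the statement is vacuous (every point is a minimizer, and the formula degenerates), so one assumes $c \ne 0$; and the exponent manipulations require $q \ge 2$ (equivalently $p \le 2$), though in fact $q \in (2,\infty]$ from $p \in (1,2]$ — the algebra $p(q-1) = q$ holds for any conjugate pair with $p > 1$, so nothing is actually lost here. The main (and only mild) obstacle is bookkeeping the exponents correctly via $1/p + 1/q = 1$: from this, $q - 1 = q/p$, so $p(q-1) = q$, which is the single identity that makes both the feasibility computation and the objective computation go through. Everything else is a direct substitution.
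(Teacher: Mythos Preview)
Your proposal is correct and follows essentially the same approach as the paper: both use H\"older's inequality to establish the lower bound $-r\Vert c\Vert_q$, then verify by direct computation that the proposed $x$ is feasible (via the identity $p(q-1)=q$) and attains this value. The only cosmetic difference is that the paper computes the objective value first and checks feasibility second, whereas you do the reverse.
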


\subsection{Schatten $\ell_p$ Balls for $p\in(1,2]$}

Given a matrix $X\in\mathbb{R}^{m\times{n}}$ we denote by $\sigma(X)$ the vector of singular values of $X$ in descending order, that is $\sigma(X)_1 \geq \sigma(X)_2 \geq ... \sigma(X)_{\min(m,n)}$. The Schatten $\ell_p$ norm is given by
\begin{eqnarray*}
\Vert{X}\Vert_{S(p)} = \Vert{\sigma(X)}\Vert_p = \left({\sum_{i=1}^{\min(m,n)}\sigma(X)_i^p}\right)^{1/p} .
\end{eqnarray*}

Consider the Schatten $\ell_p$ ball of radius $r$,
\begin{eqnarray*}
\ball_{S(p)}(r) = \lbrace{X\in\mathbb{R}^{m\times{}n} \, | \, \Vert{X}\Vert_{S(p)} \leq r}\rbrace .
\end{eqnarray*}

The following lemma is taken from \cite{Kakade12}.

\begin{lemma}\label{lem:schattenpnromsc}
Fix $p\in(1,2]$. The function $\frac{1}{2}\Vert{X}\Vert_{S(p)}^2$ is $(p-1)$-strongly convex w.r.t. the norm $\Vert{\cdot}\Vert_{S(p)}$.
\end{lemma}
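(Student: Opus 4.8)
The plan is to reduce the matrix statement to the already-known vector statement in Lemma \ref{lem:pnorm_sc}, exploiting the fact that the Schatten $\ell_p$ norm of a matrix is just the vector $\ell_p$ norm of its singular value vector. Concretely, write $g(x) = \frac{1}{2}\Vert x\Vert_p^2$ for $x$ in the appropriate Euclidean space of dimension $\min(m,n)$, so that $\frac{1}{2}\Vert X\Vert_{S(p)}^2 = g(\sigma(X))$. By Lemma \ref{lem:pnorm_sc}, $g$ is $(p-1)$-strongly convex with respect to $\Vert\cdot\Vert_p$. I would use the second (Jensen-type) formulation of strong convexity from Definition \ref{def:strongconvexfunc}: it suffices to show that for all $X, Y \in \Rmn$ and $\gamma \in [0,1]$,
\begin{eqnarray*}
\tfrac{1}{2}\Vert \gamma X + (1-\gamma) Y\Vert_{S(p)}^2 \leq \gamma \tfrac{1}{2}\Vert X\Vert_{S(p)}^2 + (1-\gamma)\tfrac{1}{2}\Vert Y\Vert_{S(p)}^2 - \tfrac{p-1}{2}\gamma(1-\gamma)\Vert X - Y\Vert_{S(p)}^2 .
\end{eqnarray*}

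The first key step is a singular-value inequality: I would invoke a standard result (e.g. a consequence of Ky Fan / Lidskii-type majorization, or the fact that $\Vert\cdot\Vert_{S(p)}$ is a unitarily invariant norm) stating that the singular values are "convex" in the sense that
\begin{eqnarray*}
\Vert \sigma(\gamma X + (1-\gamma) Y) - (\gamma\sigma(X) + (1-\gamma)\sigma(Y)) \Vert_p \ \text{is controlled appropriately,}
\end{eqnarray*}
or more directly the triangle-type bound $\Vert\sigma(\gamma X+(1-\gamma)Y)\Vert_p \le \gamma\Vert\sigma(X)\Vert_p + (1-\gamma)\Vert\sigma(Y)\Vert_p$ together with a sharper second-order refinement. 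The cleanest route is: apply the vector strong convexity of $g$ to the points $\sigma(X)$ and $\sigma(Y)$, giving
\begin{eqnarray*}
g(\gamma\sigma(X)+(1-\gamma)\sigma(Y)) \leq \gamma g(\sigma(X)) + (1-\gamma)g(\sigma(Y)) - \tfrac{p-1}{2}\gamma(1-\gamma)\Vert\sigma(X)-\sigma(Y)\Vert_p^2 ,
\end{eqnarray*}
and then combine with (i) monotonicity of $g$ in the $\ell_p$ norm plus the majorization inequality $\Vert\sigma(\gamma X+(1-\gamma)Y)\Vert_p \le \Vert\gamma\sigma(X)+(1-\gamma)\sigma(Y)\Vert_p$ to bound the left side of the target by $g(\gamma\sigma(X)+(1-\gamma)\sigma(Y))$, and (ii) the inequality $\Vert\sigma(X)-\sigma(Y)\Vert_p \ge \Vert\sigma(X-Y)\Vert_p$? — careful here, the direction needed is the opposite. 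Since the penalty term enters with a minus sign, I actually need $\Vert\sigma(X)-\sigma(Y)\Vert_p \le \Vert X-Y\Vert_{S(p)}$, which is exactly the Lidskii / Mirsky-type inequality stating that the map $X\mapsto\sigma(X)$ is $1$-Lipschitz from $\Vert\cdot\Vert_{S(p)}$ to $\Vert\cdot\Vert_p$ on singular value vectors. That gives $-\Vert\sigma(X)-\sigma(Y)\Vert_p^2 \le -\Vert X-Y\Vert_{S(p)}^2$ and closes the chain.

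The main obstacle I anticipate is locating and correctly applying the right unitarily-invariant-norm inequalities with the inequalities pointing in the directions I need: I need $\Vert\sigma(\gamma X+(1-\gamma)Y)\Vert_p$ bounded \emph{above} by $\Vert\gamma\sigma(X)+(1-\gamma)\sigma(Y)\Vert_p$ (a consequence of weak majorization of singular values under convex combinations, which holds for all symmetric gauge functions, hence all Schatten norms), and simultaneously $\Vert\sigma(X)-\sigma(Y)\Vert_p$ bounded \emph{above} by $\Vert X-Y\Vert_{S(p)}$ (Mirsky's theorem). Both are classical, but getting the quantifiers and the descending-order conventions exactly right is the delicate bookkeeping. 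Once those two facts are in hand, the rest is a direct substitution into the vector-case inequality from Lemma \ref{lem:pnorm_sc}, and strong convexity of $\frac12\Vert\cdot\Vert_{S(p)}^2$ with the same constant $p-1$ follows. An alternative, possibly slicker proof would cite a general transfer principle (also due to the references \cite{Kakade12} or \cite{Nesterov10}) that strong convexity/smoothness of a symmetric function of singular values is inherited from the corresponding property of the underlying symmetric function on $\mathbb{R}^{\min(m,n)}$; if such a black-box lemma is available it makes the proof essentially immediate, and I would state it and apply it directly.
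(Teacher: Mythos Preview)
The paper does not prove this lemma at all; it simply records it as a result taken from \cite{Kakade12}. Your closing suggestion---to invoke a black-box transfer principle from \cite{Kakade12}---is therefore exactly what the paper does.

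Your main line of argument, however, has a genuine gap. In step (ii) you must replace the vector penalty $-\tfrac{p-1}{2}\gamma(1-\gamma)\Vert\sigma(X)-\sigma(Y)\Vert_p^2$ by the matrix penalty $-\tfrac{p-1}{2}\gamma(1-\gamma)\Vert X-Y\Vert_{S(p)}^2$ while keeping an upper bound; that requires $\Vert\sigma(X)-\sigma(Y)\Vert_p \ge \Vert X-Y\Vert_{S(p)}$, i.e.\ the \emph{reverse} of Mirsky's inequality. You state Mirsky's direction correctly but then make a sign error when negating: from $\Vert\sigma(X)-\sigma(Y)\Vert_p \le \Vert X-Y\Vert_{S(p)}$ one obtains $-\Vert\sigma(X)-\sigma(Y)\Vert_p^2 \ge -\Vert X-Y\Vert_{S(p)}^2$, not $\le$. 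The reverse inequality you actually need is false in general: take $X=\mathrm{diag}(1,0)$ and $Y=\mathrm{diag}(0,1)$ in $\mathbb{R}^{2\times 2}$; then $\sigma(X)=\sigma(Y)=(1,0)$ (sorted decreasingly), so $\Vert\sigma(X)-\sigma(Y)\Vert_p=0$, whereas $\Vert X-Y\Vert_{S(p)}=2^{1/p}>0$. In this example the vector-level penalty vanishes entirely and your chain yields only $\tfrac12\Vert\gamma X+(1-\gamma)Y\Vert_{S(p)}^2 \le \tfrac12$, which is strictly weaker than the target inequality. Step (i) (the Ky Fan/weak-majorization bound) is fine, but step (ii) cannot be repaired via Mirsky; a different mechanism is needed, which is precisely why the paper defers to \cite{Kakade12} rather than attempting a self-contained reduction to Lemma~\ref{lem:pnorm_sc}.
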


The proof of the following corollary follows the exact same lines as the proof of Corollary \ref{cor:Lpball_sc} by using Lemma \ref{lem:schattenpnromsc} instead of Lemma \ref{lem:pnorm_sc}.

\begin{corollary}\label{cor:Spball_sc}
Fix $p\in(1,2]$. The set $\ball_{S(p)}(r)$ is $\frac{p-1}{r}$-strongly convex with respect to the norm $\Vert\cdot\Vert_{S(p)}$ and $\frac{(p-1)\min(m,n)^{\frac{1}{2}-\frac{1}{p}}}{r}$-strongly convex with respect to the frobenius norm $\Vert\cdot\Vert_F$.
\end{corollary}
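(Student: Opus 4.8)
The plan is to mirror the proof of Corollary \ref{cor:Lpball_sc}, replacing the vector $\ell_p$ facts by their Schatten-norm analogues and $n$ by $\min(m,n)$. First I would establish strong convexity with respect to $\Vert\cdot\Vert_{S(p)}$ itself. By Lemma \ref{lem:schattenpnromsc}, $\frac12\Vert X\Vert_{S(p)}^2$ is $(p-1)$-strongly convex with respect to $\Vert\cdot\Vert_{S(p)}$, hence $f(X)=\Vert X\Vert_{S(p)}^2$ is $2(p-1)$-strongly convex with respect to this norm over $\Rmn$. Since $\ball_{S(p)}(r)$ is exactly the norm ball $\ball_{\Vert\cdot\Vert_{S(p)}}(r)$, applying Lemma \ref{lem:strongconvexset} with $\alpha=2(p-1)$ yields that $\ball_{S(p)}(r)$ is $\frac{2(p-1)}{2r}=\frac{p-1}{r}$-strongly convex with respect to $\Vert\cdot\Vert_{S(p)}$, which is the first assertion.

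For the Frobenius assertion I would transfer this via norm equivalence, using the ``contains a ball'' formulation of Definition \ref{def:strongconvexset}. For any $X\in\Rmn$ the singular value vector $\sigma(X)$ lies in $\mathbb{R}^{\min(m,n)}$, so the elementary inequalities $\Vert v\Vert_2\le\Vert v\Vert_p\le\min(m,n)^{1/p-1/2}\Vert v\Vert_2$ (valid for $p\in(1,2]$) give $\Vert X\Vert_F\le\Vert X\Vert_{S(p)}\le\min(m,n)^{1/p-1/2}\Vert X\Vert_F$. Now fix $X,Y\in\ball_{S(p)}(r)$ and $\gamma\in[0,1]$. The first part says $\ball_{S(p)}(r)$ contains the $\Vert\cdot\Vert_{S(p)}$-ball of radius $\rho:=\gamma(1-\gamma)\frac{p-1}{2r}\Vert X-Y\Vert_{S(p)}^2$ centered at $\gamma X+(1-\gamma)Y$; by the right-hand inequality that $\Vert\cdot\Vert_{S(p)}$-ball contains the Frobenius ball of radius $\min(m,n)^{1/2-1/p}\rho$ about the same center, and by $\Vert X-Y\Vert_{S(p)}\ge\Vert X-Y\Vert_F$ we may lower-bound $\rho$ by $\gamma(1-\gamma)\frac{p-1}{2r}\Vert X-Y\Vert_F^2$. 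Chaining these inclusions, $\ball_{S(p)}(r)$ contains the Frobenius ball of radius $\gamma(1-\gamma)\frac{(p-1)\min(m,n)^{1/2-1/p}}{2r}\Vert X-Y\Vert_F^2$ centered at $\gamma X+(1-\gamma)Y$, which is precisely the statement that $\ball_{S(p)}(r)$ is $\frac{(p-1)\min(m,n)^{1/2-1/p}}{r}$-strongly convex with respect to $\Vert\cdot\Vert_F$.

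No step here is genuinely difficult; the only thing to watch is that the two norm inequalities point in consistent directions. For $p\le2$ the Schatten-$p$ norm dominates the Frobenius norm and is in turn dominated by $\min(m,n)^{1/p-1/2}$ times it, so the inner ball must be shrunk by the factor $\min(m,n)^{1/2-1/p}\le1$ to remain inside, while the squared-distance factor can only decrease when we pass from $\Vert\cdot\Vert_{S(p)}$ to $\Vert\cdot\Vert_F$; both effects move the strong-convexity constant in the same direction, so they combine without conflict. The argument is otherwise identical, \emph{mutatis mutandis}, to the proof of Corollary \ref{cor:Lpball_sc}.
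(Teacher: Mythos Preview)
Your proposal is correct and follows essentially the same approach the paper intends: it mirrors the proof of Corollary~\ref{cor:Lpball_sc}, invoking Lemma~\ref{lem:schattenpnromsc} together with Lemma~\ref{lem:strongconvexset} for the first part, and then transferring to the Frobenius norm via the Schatten analogue of the inequality $\Vert v\Vert_2\le\Vert v\Vert_p\le \min(m,n)^{1/p-1/2}\Vert v\Vert_2$. Your ``ball-containment'' phrasing is a clean repackaging of exactly the same two estimates the paper uses (bounding $\Vert z'\Vert_{S(p)}$ from above and $\Vert X-Y\Vert_{S(p)}$ from below), so there is no substantive difference.
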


The following lemma establishes that linear optimization over $\ball_{S(p)}(r)$ admits a simple closed-form solution given the \textit{singular value decomposition} of the linear objective. The proof is given in the appendix.

\begin{lemma}\label{lem:sball_linopt}
Fix $p\in(1,2]$, $r>0$ and a linear objective $C\in\mathbb{R}^{m\times{n}}$. Let $C=U\Sigma{}V^{\top}$ be the \textit{singular value decomposition} of $C$. Let $\sigma$ be a vector such that $\sigma_i = -\frac{r}{\Vert{\sigma(C)}\Vert_q^{q-1}}\sigma(C)_i^{q-1}$ where $q$ satisfies: $1/q + 1/p =1$. Finally, let $X=UDiag(\sigma)V^{\top}$ where $Diag(\sigma)$ is an $m\times{n}$ diagonal matrix with the vector $\sigma$ as the main diagonal. Then $X=\arg\min_{Y\in\ball_{S(p)}(r)}Y\bullet{}C$, where $\bullet$ denotes the standard inner product for matrices.
\end{lemma}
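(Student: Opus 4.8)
The plan is to reduce the matrix problem to the vector problem already handled in Lemma~\ref{lem:pball_linopt}, using the standard fact that Schatten norms are unitarily invariant and that the trace inner product interacts nicely with the singular value decomposition. First I would write, for an arbitrary feasible $Y\in\ball_{S(p)}(r)$, the inner product $Y\bullet C = \mathrm{Tr}(Y^\top C)$ and substitute $C=U\Sigma V^\top$, so that $Y\bullet C = \mathrm{Tr}(Y^\top U\Sigma V^\top) = \mathrm{Tr}(V^\top Y^\top U \,\Sigma)$. Setting $Z = U^\top Y V$, which has the same Schatten $\ell_p$ norm as $Y$ because $U,V$ are orthogonal, the objective becomes $\mathrm{Tr}(Z^\top\Sigma) = \sum_i \Sigma_{ii} Z_{ii} = \sum_i \sigma(C)_i Z_{ii}$. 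So minimizing $Y\bullet C$ over $\ball_{S(p)}(r)$ is equivalent to minimizing $\sum_i \sigma(C)_i Z_{ii}$ over all $Z$ with $\Vert\sigma(Z)\Vert_p\le r$.

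Next I would argue that at the optimum $Z$ may be taken diagonal. The key inequality is $\sum_i \sigma(C)_i Z_{ii} \ge -\sum_i \sigma(C)_i \sigma(Z)_i$ (since $\sigma(C)_i\ge 0$ and the diagonal entries of $Z$ are bounded in absolute value by its singular values in the appropriate majorization sense; more precisely, $|Z_{ii}|$ is dominated by a convex combination of the $\sigma(Z)_j$, and $\sigma(C)$ is sorted nonincreasingly, so by a rearrangement/von~Neumann-type trace inequality the sum is at least $-\sum_i\sigma(C)_i\sigma(Z)_i$). Hence the value is lower bounded by $\min\{\,-\sigma(C)\cdot s \;:\; s\ge 0,\ \Vert s\Vert_p\le r\,\}$, and this lower bound is attained by a diagonal $Z$ whose diagonal is $-s^*$, with $s^*$ the optimal such vector. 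That reduces the problem exactly to linear optimization of the objective $\sigma(C)$ (with nonnegativity, but the unconstrained $\ell_p$-ball optimizer from Lemma~\ref{lem:pball_linopt} already comes out with the right signs since $\sigma(C)_i\ge 0$) over the $\ell_p$ ball of radius $r$ in $\mathbb{R}^{\min(m,n)}$.

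Then I would invoke Lemma~\ref{lem:pball_linopt} with the linear objective $c = \sigma(C)$: its optimizer is the vector $\sigma$ with $\sigma_i = -\frac{r}{\Vert\sigma(C)\Vert_q^{q-1}}\,\mathrm{sgn}(\sigma(C)_i)\,|\sigma(C)_i|^{q-1} = -\frac{r}{\Vert\sigma(C)\Vert_q^{q-1}}\,\sigma(C)_i^{q-1}$, which is exactly the vector $\sigma$ in the statement. Undoing the substitution $Z\mapsto Y = U Z V^\top$ with $Z = \mathrm{Diag}(\sigma)$ gives $X = U\,\mathrm{Diag}(\sigma)\,V^\top$, which is feasible because $\Vert X\Vert_{S(p)} = \Vert\sigma\Vert_p = \Vert x\Vert_p \le r$ by Lemma~\ref{lem:pball_linopt}, and achieves the lower bound, hence is optimal.

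I expect the main obstacle to be the reduction-to-diagonal step, i.e.\ justifying rigorously that $\min_Y Y\bullet C$ equals $\min_{s\ge0,\Vert s\Vert_p\le r}(-\sigma(C)\cdot s)$. This needs a clean statement of the relevant trace inequality: for matrices $Z$ and a diagonal nonnegative $\Sigma$ sorted nonincreasingly, $\mathrm{Tr}(Z^\top\Sigma)\ge -\sum_i\sigma(C)_i\sigma(Z)_i$, which follows from von~Neumann's trace inequality applied to $-Z$ and $\Sigma$ (or from the fact that the diagonal of $Z$ is majorized by $\sigma(Z)$ together with sortedness of $\sigma(C)$). Everything else — unitary invariance of the Schatten norm, the change of variables, and plugging into Lemma~\ref{lem:pball_linopt} — is routine. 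It is worth noting that one does not even need the full strength of this: since we already have a feasible candidate $X$ from the construction, it suffices to exhibit a matching lower bound $Y\bullet C \ge X\bullet C$ for all feasible $Y$, and the chain $Y\bullet C \ge -\sum_i\sigma(C)_i\sigma(Y)_i \ge -\sum_i\sigma(C)_i\sigma(X)_i = X\bullet C$ does exactly that, where the middle inequality is the vector case of Lemma~\ref{lem:pball_linopt} applied to $\sigma(Y)$.
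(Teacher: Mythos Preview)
Your argument is correct, but it takes a different route from the paper. The paper proceeds in the most direct way possible: it first invokes H\"older's inequality for Schatten norms (using that $\Vert\cdot\Vert_{S(p)}$ and $\Vert\cdot\Vert_{S(q)}$ are dual) to get the universal lower bound $Y\bullet C \ge -r\Vert\sigma(C)\Vert_q$ for every feasible $Y$, then simply verifies by a short trace computation (collapsing $U^\top U$ and $V^\top V$) that the candidate $X=U\,\mathrm{Diag}(\sigma)\,V^\top$ attains this bound and satisfies $\Vert X\Vert_{S(p)}=r$. You instead change variables to $Z=U^\top Y V$, reduce the objective to $\sum_i\sigma(C)_i Z_{ii}$, and then appeal to von~Neumann's trace inequality to pass to the diagonal/vector problem, which you hand off to Lemma~\ref{lem:pball_linopt}. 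Both arguments are valid and ultimately rest on the same ingredient---H\"older for Schatten norms is essentially von~Neumann's inequality plus the vector H\"older inequality---but the paper's packaging is shorter because it treats the Schatten H\"older inequality as a black box, whereas yours makes the reduction to the vector case explicit and thereby reuses Lemma~\ref{lem:pball_linopt} rather than redoing its computation. One small remark: your first justification of the diagonal reduction via ``$|Z_{ii}|$ dominated by a convex combination of the $\sigma(Z)_j$'' together with sortedness of $\sigma(C)$ is a bit loose as stated; your final paragraph's clean invocation of von~Neumann applied to $-Z$ and $\Sigma$ is the right way to phrase it, and is all you need.
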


\subsection{Group $\ell_{s,p}$ Balls for $s,p\in(1,2]$}

Given a matrix $X\in\mathbb{R}^{m\times{n}}$ denote by $X_i\in\mathbb{R}^n$ the $i$th row of $X$. That is $X = (X_1, X_2, ...,X_m)^{\top}$.

The $\ell_{s,p}$ norm of $X$ is given by,
\begin{eqnarray*}
\Vert{X}\Vert_{s,p}=\Vert{(\Vert{X_1}\Vert_s, \Vert{X_2}\Vert_s,..., \Vert{X_m}\Vert_s)}\Vert_p .
\end{eqnarray*}

We define the  $\ell_{s,p}$ ball as follows:
\begin{eqnarray*}
\ball_{s,p}(r) = \lbrace{X\in\mathbb{R}^{m\times{}n} \, | \, \Vert{X}\Vert_{s,p} \leq r}\rbrace .
\end{eqnarray*}

The proof of the following lemma is given in the appendix.
\begin{lemma}\label{lem:groupballsc}
Let $s,p\in(1,2]$. The set $\ball_{s,p}(r)$ is $\frac{(s-1)(p-1)}{(s+p-2)r}$-strongly convex with respect to the norm $\Vert\cdot\Vert_{s,p}$ and $n^{\frac{1}{s}-\frac{1}{2}}m^{\frac{1}{p}-\frac{1}{2}}\frac{(s-1)(p-1)}{(s+p-2)r}$-strongly convex with respect to the frobenius norm $\Vert\cdot\Vert_F$.
\end{lemma}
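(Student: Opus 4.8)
The plan is to obtain both strong-convexity bounds from Lemma~\ref{lem:strongconvexset}, exactly as Corollary~\ref{cor:Lpball_sc} is obtained from Lemma~\ref{lem:pnorm_sc}. The real content is therefore to show that $\tfrac12\Vert X\Vert_{s,p}^2$ is $\tfrac{(s-1)(p-1)}{s+p-2}$-strongly convex over $\Rmn$ with respect to $\Vert\cdot\Vert_{s,p}$. Granting this, $\Vert X\Vert_{s,p}^2$ is $\tfrac{2(s-1)(p-1)}{s+p-2}$-strongly convex with respect to $\Vert\cdot\Vert_{s,p}$, and Lemma~\ref{lem:strongconvexset} immediately yields that $\ball_{s,p}(r)$ is $\tfrac{(s-1)(p-1)}{(s+p-2)r}$-strongly convex with respect to $\Vert\cdot\Vert_{s,p}$, which is the first assertion.

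To prove the strong convexity of $\tfrac12\Vert\cdot\Vert_{s,p}^2$ I would pass to Fenchel duals. The group norm $\Vert\cdot\Vert_{s,p}$ is the $\ell_p$ norm applied to the vector of $\ell_s$-norms of the rows, so its dual norm is $\Vert\cdot\Vert_{s',p'}$ with $1/s+1/s'=1$ and $1/p+1/p'=1$ (hence $s',p'\in[2,\infty)$). Since the Fenchel conjugate of $\tfrac12\Vert\cdot\Vert^2$ is $\tfrac12\Vert\cdot\Vert_*^2$, and since a function is $\alpha$-strongly convex with respect to $\Vert\cdot\Vert$ if and only if its conjugate is $\tfrac1\alpha$-strongly smooth with respect to $\Vert\cdot\Vert_*$, it suffices to show that $\tfrac12\Vert\cdot\Vert_{s',p'}^2$ is $\bigl(\tfrac1{s-1}+\tfrac1{p-1}\bigr)$-strongly smooth with respect to $\Vert\cdot\Vert_{s',p'}$; here $\tfrac1{s-1}+\tfrac1{p-1}=\tfrac{s+p-2}{(s-1)(p-1)}$, and $\tfrac1{s-1}=s'-1$, $\tfrac1{p-1}=p'-1$. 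The ingredients are the dual form of Lemma~\ref{lem:pnorm_sc} — for any $a\in[2,\infty)$ the function $\tfrac12\Vert\cdot\Vert_a^2$ is $(a-1)$-strongly smooth with respect to $\Vert\cdot\Vert_a$ — used once at the row level ($a=s'$) and once at the outer level ($a=p'$), combined through a composition result: if the inner squared norm is $\beta_{\mathrm{in}}$-strongly smooth and the outer squared norm (which is monotone on the nonnegative orthant) is $\beta_{\mathrm{out}}$-strongly smooth, then the squared group norm is $(\beta_{\mathrm{in}}+\beta_{\mathrm{out}})$-strongly smooth. This composition step is where I expect the real work: one either invokes a known fact from the literature on strongly convex/strongly smooth regularizers (the same circle of ideas behind Lemma~\ref{lem:pnorm_sc} and its Schatten counterpart) or proves it directly by expanding $\Vert\cdot\Vert_{s',p'}^2$ to second order along an arbitrary segment and bounding the resulting quadratic form by $(\beta_{\mathrm{in}}+\beta_{\mathrm{out}})$ times the squared norm of the direction.

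For the Frobenius claim I would transfer the $\Vert\cdot\Vert_{s,p}$-strong convexity of $\ball_{s,p}(r)$ to $\Vert\cdot\Vert_F$ using the elementary comparisons $\Vert X\Vert_F\le\Vert X\Vert_{s,p}\le n^{\frac1s-\frac12}m^{\frac1p-\frac12}\Vert X\Vert_F$ for $s,p\in(1,2]$, which follow by applying $\Vert v\Vert_2\le\Vert v\Vert_t\le d^{\frac1t-\frac12}\Vert v\Vert_2$ for $t\in(1,2]$ at the row level with $d=n$ and at the outer level with $d=m$. Feeding these into Definition~\ref{def:strongconvexset} — a $\Vert\cdot\Vert_{s,p}$-ball of radius $\rho$ contains a $\Vert\cdot\Vert_F$-ball of a proportionally smaller radius, and $\Vert x-y\Vert_{s,p}\ge\Vert x-y\Vert_F$ — converts $\alpha$-strong convexity with respect to $\Vert\cdot\Vert_{s,p}$ into strong convexity with respect to $\Vert\cdot\Vert_F$ with the parameter multiplied by the relevant power of $n$ and $m$; this mirrors the second half of Corollary~\ref{cor:Lpball_sc}, and the only delicate point is pinning down the exact exponents.
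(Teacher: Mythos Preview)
Your proposal is correct and essentially identical to the paper's proof: the paper too dualizes to the conjugate group norm $\Vert\cdot\Vert_{z,q}$ (your $\Vert\cdot\Vert_{s',p'}$), invokes the smoothness of the one-dimensional $\ell_z$ and $\ell_q$ squared norms, and then uses the very composition result you flag as the crux---cited there as Theorem~13 of \cite{Kakade12}---to get $(\alpha_s^{-1}+\alpha_p^{-1})$-smoothness of $\Vert\cdot\Vert_{z,q}^2$, before dualizing back and applying Lemma~\ref{lem:strongconvexset}. The Frobenius transfer is done exactly as you describe, via the row-then-column comparison $\Vert X\Vert_F\le\Vert X\Vert_{s,p}\le n^{1/s-1/2}m^{1/p-1/2}\Vert X\Vert_F$, which the paper records separately as Lemma~\ref{lem:spnormineq}.
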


The following lemma establishes that linear optimization over $\ball_{s,p}(r)$ admits a simple closed-form solution that can be computed in time that is linear in the number of non-zeros in the linear objective. The proof is given in the appendix.

\begin{lemma}\label{lem:groupball_linopt}
Fix $s,p\in(1,2]$, $r>0$ and a linear objective $C\in\mathbb{R}^{m\times{n}}$.  Let $X\in\mathbb{R}^{m\times{n}}$ be such that $X_{i,j} = -\frac{r}{\Vert{C}\Vert_{z,q}^{q-1}\Vert{C_i}\Vert_z^{z-q}}\textrm{sgn}(C_{i,j})\vert{C_{i,j}}\vert^{z-1}$ where $z$ satisfies: $1/s+1/z=1$, $q$ satisfies: $1/p+1/q=1$ and $C_i$ denotes the $i$th row of $C$. Then $X=\arg\min_{Y\in\ball_{s,p}(r)}Y\bullet{}C$, where $\bullet$ denotes the standard inner product for matrices.
\end{lemma}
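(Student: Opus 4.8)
The plan is to reduce the linear optimization to the computation of the norm dual to $\Vert\cdot\Vert_{s,p}$ (with respect to the trace inner product $\bullet$), which is the group norm $\Vert\cdot\Vert_{z,q}$ with the conjugate exponents $1/s+1/z=1$, $1/p+1/q=1$. Rather than invoke mixed-norm duality as a black box, I would obtain the lower bound directly by applying H\"older's inequality twice, once inside each row and once across the vector of row norms: for any $Y$ with $\Vert Y\Vert_{s,p}\le r$,
\begin{align*}
Y\bullet C = \sum_{i=1}^{m} Y_i\cdot C_i &\ge -\sum_{i=1}^{m}\Vert Y_i\Vert_s\,\Vert C_i\Vert_z \\
&\ge -\big\Vert(\Vert Y_i\Vert_s)_{i=1}^{m}\big\Vert_p\,\big\Vert(\Vert C_i\Vert_z)_{i=1}^{m}\big\Vert_q = -\Vert Y\Vert_{s,p}\,\Vert C\Vert_{z,q} \ge -r\,\Vert C\Vert_{z,q}.
\end{align*}
Hence $\min_{Y\in\ball_{s,p}(r)}Y\bullet C\ge -r\Vert C\Vert_{z,q}$, and it remains to check that the proposed matrix $X$ is feasible and attains this value, which then certifies its optimality.

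Both checks are exponent bookkeeping, using the conjugacy identities $s(z-1)=z$, $z/s=z-1$, $p(q-1)=q$ and $q/p=q-1$. Since $\textrm{sgn}(C_{i,j})\,C_{i,j}=|C_{i,j}|$, one gets $X_{i,j}C_{i,j}=-\frac{r}{\Vert C\Vert_{z,q}^{q-1}\Vert C_i\Vert_z^{z-q}}|C_{i,j}|^{z}$; summing over $j$ and using $\sum_j|C_{i,j}|^z=\Vert C_i\Vert_z^z$ yields $\sum_j X_{i,j}C_{i,j}=-\frac{r\,\Vert C_i\Vert_z^{q}}{\Vert C\Vert_{z,q}^{q-1}}$, and then summing over $i$ and using $\sum_i\Vert C_i\Vert_z^q=\Vert C\Vert_{z,q}^{q}$ gives $X\bullet C=-r\Vert C\Vert_{z,q}$, matching the lower bound. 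For feasibility, the same identities give $\Vert X_i\Vert_s=\frac{r\,\Vert C_i\Vert_z^{q-1}}{\Vert C\Vert_{z,q}^{q-1}}$, hence $\Vert X\Vert_{s,p}^{p}=\frac{r^{p}}{\Vert C\Vert_{z,q}^{q}}\sum_i\Vert C_i\Vert_z^{q}=r^{p}$, so $X\in\ball_{s,p}(r)$.

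Finally I would dispose of the degenerate cases: if $C=0$ every feasible point is optimal, and if a row $C_i=0$ the corresponding row of $X$ should be read as $0$ (consistent with $|C_{i,j}|^{z-1}=0$), neither of which affects the two computations above. I do not expect a genuine obstacle here; the only care needed is in getting the two-level H\"older argument and the chain of exponent identities exactly right, and the structure simply mirrors the single $\ell_p$-ball case of Lemma \ref{lem:pball_linopt} applied in a nested fashion to the rows and then to the vector of row norms.
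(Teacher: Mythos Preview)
Your proposal is correct and follows essentially the same route as the paper: establish the lower bound $Y\bullet C\ge -r\Vert C\Vert_{z,q}$, then verify by direct computation that the given $X$ is feasible and attains it. The only cosmetic differences are that the paper invokes the duality of $\Vert\cdot\Vert_{s,p}$ and $\Vert\cdot\Vert_{z,q}$ as a known fact (rather than deriving it via two nested H\"older inequalities as you do), and that the paper does not separately discuss the degenerate cases $C=0$ or $C_i=0$.
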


\section{Conclusions and Open Problems}
In this paper we proved that the Frank-Wolfe algorithm converges at an accelerated rate of $O(1/t^2)$ for smooth and strongly-convex optimization over strongly-convex sets, beating the known tight convergence rate of the method  for general smooth and convex optimization. This is one of the very few known results that achieve such an improvement in convergence rate under natural and standard assumptions (i.e. strong convexity etc.). We have further demonstrated that various regularization functions in machine learning give rise to strongly convex sets. We have also demonstrated how previous fast convergence rate results follow easily from our analysis.

The following questions naturally arise.

It is known that in case the objective function is both smooth and strongly convex, projection/prox-based methods achieve a convergence rate of $O(\log(1/\epsilon))$. Is it possible to achieve such a rate for the FW method in case the convex set is strongly convex?

We have shown that it is possible to obtain faster rates for optimization over balls induced by norms that give rise to strongly convex functions. Is it possible to obtain faster rates for balls induced by norms that do not give rise to strongly convex functions (but rather to smooth functions)? e.g. is it possible to obtain faster rates for $\ell_p$ balls for $p > 2$.

Finally, the most intriguing question is to give a linear optimization oracle-based method that enjoys the same convergence rate, at least in terms of the approximation error, as optimal projection/prox-based gradient methods, in any regime (including non-smooth problems).  A progress in this direction was made recently by Garber and Hazan \cite{Garber13b} who showed that in case the feasible set is a polytope, a variant of the FW-method obtains the same rates as the projected (sub)gradient descent method.

\section*{Acknowledgments} 
The research leading to these results has received funding from the European Union’s Seventh Framework Programme (FP7/2007-2013) under grant agreement n$^\circ$ 336078 -- ERC-SUBLRN.

\bibliography{bib}

\begin{thebibliography}{25}
\providecommand{\natexlab}[1]{#1}
\providecommand{\url}[1]{\texttt{#1}}
\expandafter\ifx\csname urlstyle\endcsname\relax
  \providecommand{\doi}[1]{doi: #1}\else
  \providecommand{\doi}{doi: \begingroup \urlstyle{rm}\Url}\fi

\bibitem[Ahipasaoglu et~al.(2008)Ahipasaoglu, Sun, and Todd]{Ahipasaoglu08}
Ahipasaoglu, S.~Damla, Sun, Peng, and Todd, Michael~J.
\newblock Linear convergence of a modified frank-wolfe algorithm for computing
  minimum-volume enclosing ellipsoids.
\newblock \emph{Optimization Methods and Software}, 23\penalty0 (1):\penalty0
  5--19, 2008.

\bibitem[Beck \& Teboulle(2004)Beck and Teboulle]{BeckTaboule}
Beck, Amir and Teboulle, Marc.
\newblock A conditional gradient method with linear rate of convergence for
  solving convex linear systems.
\newblock \emph{Math. Meth. of OR}, 59\penalty0 (2):\penalty0 235--247, 2004.

\bibitem[Clarkson(2008)]{Clarkson}
Clarkson, Kenneth~L.
\newblock Coresets, sparse greedy approximation, and the frank-wolfe algorithm.
\newblock In \emph{Proceedings of the Nineteenth Annual {ACM-SIAM} Symposium on
  Discrete Algorithms, {SODA}}, 2008.

\bibitem[Demyanov \& Rubinov(1970)Demyanov and Rubinov]{Demyanov70}
Demyanov, Vladimir~F. and Rubinov, Aleksandr~M.
\newblock \emph{{Approximate methods in optimization problems}}.
\newblock Elsevier Publishing Company, 1970.

\bibitem[Dud\'{\i}k et~al.(2012)Dud\'{\i}k, Harchaoui, and Malick]{Dudik12a}
Dud\'{\i}k, Miroslav, Harchaoui, Za\"{\i}d, and Malick, J{\'e}r{\^o}me.
\newblock Lifted coordinate descent for learning with trace-norm
  regularization.
\newblock \emph{Journal of Machine Learning Research - Proceedings Track},
  22:\penalty0 327--336, 2012.

\bibitem[Dunn(1979)]{Dunn79}
Dunn, Joseph~C.
\newblock {Rates of Convergence for Conditional Gradient Algorithms Near
  Singular and Nonsingular Extremals}.
\newblock \emph{SIAM Journal on Control and Optimization}, 17\penalty0 (2),
  1979.

\bibitem[Frank \& Wolfe(1956)Frank and Wolfe]{FrankWolfe}
Frank, M. and Wolfe, P.
\newblock An algorithm for quadratic programming.
\newblock \emph{Naval Research Logistics Quarterly}, 3:\penalty0 149--154,
  1956.

\bibitem[Garber \& Hazan(2013{\natexlab{a}})Garber and Hazan]{Garber13}
Garber, Dan and Hazan, Elad.
\newblock Playing non-linear games with linear oracles.
\newblock In \emph{54th Annual {IEEE} Symposium on Foundations of Computer
  Science, {FOCS}}, 2013{\natexlab{a}}.

\bibitem[Garber \& Hazan(2013{\natexlab{b}})Garber and Hazan]{Garber13b}
Garber, Dan and Hazan, Elad.
\newblock A linearly convergent conditional gradient algorithm with
  applications to online and stochastic optimization.
\newblock \emph{CoRR}, abs/1301.4666, 2013{\natexlab{b}}.

\bibitem[Gu{\'{e}}Lat \& Marcotte(1986)Gu{\'{e}}Lat and Marcotte]{GueLat1986}
Gu{\'{e}}Lat, Jacques and Marcotte, Patrice.
\newblock {Some comments on Wolfe's `away step'}.
\newblock \emph{Mathematical Programming}, 35\penalty0 (1), 1986.

\bibitem[Harchaoui et~al.(2012)Harchaoui, Douze, Paulin, Dud{\'{\i}}k, and
  Malick]{Dudik12b}
Harchaoui, Za{\"{\i}}d, Douze, Matthijs, Paulin, Mattis, Dud{\'{\i}}k,
  Miroslav, and Malick, J{\'{e}}r{\^{o}}me.
\newblock Large-scale image classification with trace-norm regularization.
\newblock In \emph{{IEEE} Conference on Computer Vision and Pattern
  Recognition, {CVPR}}, 2012.

\bibitem[Hazan(2008)]{Hazan08}
Hazan, Elad.
\newblock Sparse approximate solutions to semidefinite programs.
\newblock In \emph{8th Latin American Theoretical Informatics Symposium,
  {LATIN}}, 2008.

\bibitem[Hazan \& Kale(2012)Hazan and Kale]{Hazan12}
Hazan, Elad and Kale, Satyen.
\newblock Projection-free online learning.
\newblock In \emph{Proceedings of the 29th International Conference on Machine
  Learning, {ICML}}, 2012.

\bibitem[Jaggi(2013)]{Jaggi13b}
Jaggi, Martin.
\newblock Revisiting frank-wolfe: Projection-free sparse convex optimization.
\newblock In \emph{Proceedings of the 30th International Conference on Machine
  Learning, {ICML}}, 2013.

\bibitem[Jaggi \& Sulovsk{\'{y}}(2010)Jaggi and Sulovsk{\'{y}}]{Jaggi10}
Jaggi, Martin and Sulovsk{\'{y}}, Marek.
\newblock A simple algorithm for nuclear norm regularized problems.
\newblock In \emph{Proceedings of the 27th International Conference on Machine
  Learning, {ICML}}, 2010.

\bibitem[Journ{\'e}e et~al.(2010)Journ{\'e}e, Nesterov, Richt{\'a}rik, and
  Sepulchre]{Nesterov10}
Journ{\'e}e, Michel, Nesterov, Yurii, Richt{\'a}rik, Peter, and Sepulchre,
  Rodolphe.
\newblock Generalized power method for sparse principal component analysis.
\newblock \emph{Journal of Machine Learning Research}, 11:\penalty0 517--553,
  2010.

\bibitem[Kakade et~al.(2012)Kakade, Shalev-Shwartz, and Tewari]{Kakade12}
Kakade, Sham~M., Shalev-Shwartz, Shai, and Tewari, Ambuj.
\newblock Regularization techniques for learning with matrices.
\newblock \emph{Journal of Machine Learning Research}, 13:\penalty0 1865--1890,
  2012.

\bibitem[Lacoste-Julien \& Jaggi(2013)Lacoste-Julien and Jaggi]{Jaggi13c}
Lacoste-Julien, Simon and Jaggi, Martin.
\newblock An affine invariant linear convergence analysis for frank-wolfe
  algorithms.
\newblock \emph{CoRR}, abs/1312.7864, 2013.

\bibitem[Lacoste{-}Julien et~al.(2013)Lacoste{-}Julien, Jaggi, Schmidt, and
  Pletscher]{Jaggi13a}
Lacoste{-}Julien, Simon, Jaggi, Martin, Schmidt, Mark~W., and Pletscher,
  Patrick.
\newblock Block-coordinate frank-wolfe optimization for structural svms.
\newblock In \emph{Proceedings of the 30th International Conference on Machine
  Learning, {ICML}}, 2013.

\bibitem[Lan(2013)]{Lan13}
Lan, Guanghui.
\newblock The complexity of large-scale convex programming under a linear
  optimization oracle.
\newblock \emph{CoRR}, abs/1309.5550, 2013.

\bibitem[Laue(2012)]{Laue12}
Laue, S{\"{o}}ren.
\newblock A hybrid algorithm for convex semidefinite optimization.
\newblock In \emph{Proceedings of the 29th International Conference on Machine
  Learning, {ICML}}, 2012.

\bibitem[Levitin \& Polyak(1966)Levitin and Polyak]{Polyak}
Levitin, Evgeny~S and Polyak, Boris~T.
\newblock Constrained minimization methods.
\newblock \emph{USSR Computational mathematics and mathematical physics},
  6:\penalty0 1--50, 1966.

\bibitem[Migdalas(1994)]{Migdalas}
Migdalas, Athanasios.
\newblock A regularization of the frank—wolfe method and unification of
  certain nonlinear programming methods.
\newblock \emph{Mathematical Programming}, 65:\penalty0 331--345, 1994.

\bibitem[Shalev{-}Shwartz et~al.(2011)Shalev{-}Shwartz, Gonen, and
  Shamir]{ShalevShwartz11}
Shalev{-}Shwartz, Shai, Gonen, Alon, and Shamir, Ohad.
\newblock Large-scale convex minimization with a low-rank constraint.
\newblock In \emph{Proceedings of the 28th International Conference on Machine
  Learning, {ICML}}, 2011.

\bibitem[Shwartz(2007)]{ShayShalev07}
Shwartz, Shay~Sahlev.
\newblock Phd thesis.
\newblock 2007.

\end{thebibliography}
\bibliographystyle{icml2015}

\newpage

\appendix

\section{Proof of Theorem \ref{thr:originalfw}}
\begin{proof}
Fix an iteration $t$. By the $\beta_f$-smoothness of $f$ we have that
\begin{eqnarray}\label{eq:oldfwproof}
h_{t+1} &=& f(x_t + \eta_t(p_t-x_t)) - f(x^*) \nonumber \\
&\leq & f(x_t) - f(x^*) + \eta_t(p_t-x_t)\cdot\nabla{}f(x_t)\nonumber  \\
&+& \frac{\eta_t^2\beta_f}{2}\Vert{p_t-x_t}\Vert^2 \nonumber \\
&\leq & h_t - \eta_th_t + \frac{\eta_t^2\beta_fD_{\mK}^2}{2} ,
\end{eqnarray}
where the last inequality follows from convexity of $f$. Notice that by the optimal choice of $\eta_t$ in Algorithm \ref{alg:condgrad}, it holds in particular that $h_{t+1} \leq h_t$ (by setting $\eta_t=0$ in Eq. \eqref{eq:oldfwproof}).

Fix $C = 8\beta_fD_{\mK}^2$. We now prove by induction on $t$ that $h_t \leq \frac{C}{t}$. 

For the base case $t=1$ we notice that by the optimal choice of $\eta_0$ in Algorithm \ref{alg:condgrad} we can in particular set $\eta_0 = 1$ and thus it follows from Eq. \eqref{eq:oldfwproof} that $h_1 \leq \frac{\beta_fD_{\mK}^2}{2} < C$ as needed.

Assume now that the induction holds for $t\geq 1$. That is $h_t \leq \frac{C}{t}$. We consider two cases.

If $h_t \leq \frac{C}{2t}$ then we have 
\begin{eqnarray*}
h_{t+1} \leq h_t \leq \frac{C}{2t} = \frac{C}{t+1}\cdot\frac{t+1}{2t} \leq \frac{C}{t+1} ,
\end{eqnarray*}
where the last inequality holds for any $t\geq 1$.

Otherwise it holds that $h_t > \frac{C}{2t}$. Using Eq. \eqref{eq:oldfwproof} again we have
\begin{eqnarray*}
h_{t+1} \leq h_t -\eta_th_t + \frac{\eta_t^2\beta_fD_{\mK}^2}{2} .
\end{eqnarray*}
By the optimal choice of $\eta_t$ in Algorithm \ref{alg:condgrad} we can set $\eta_t = \frac{h_t}{\beta_fD_{\mK}^2}$ and get 
\begin{eqnarray*}
h_{t+1} &\leq& h_t - \frac{1}{2\beta_fD_{\mK}^2}h_t^2 < \frac{C}{t} - \frac{C^2}{8\beta_fD_{\mK}^2t^2} \\
&=& \frac{C}{t+1}\left({\frac{t+1}{t} - \frac{C(t+1)}{8\beta_fD_{\mK}^2t^2}}\right) \\
&< & \frac{C}{t+1}\left({1 + \frac{1}{t} - \frac{Ct}{8\beta_fD_{\mK}^2t^2}}\right) .
\end{eqnarray*}
Thus for $C\geq 8\beta_fD_{\mK}^2$ we have that $h_{t+1} \leq \frac{C}{t+1}$.
\end{proof}

\section{Proofs of Lemmas and Corollaries from Section \ref{sec:sets}}


\subsection{Proof of Lemma \ref{lem:strongconvexset}}
\begin{proof}
It suffices to show that given $x,y\in\vecspace$ such that $f(x)\leq r^2, f(y) \leq r^2$, a scalar $\gamma\in[0,1]$ and $z\in\vecspace$ such that $\Vert{z}\Vert \leq \frac{\alpha}{4r}\gamma(1-\gamma)\Vert{x-y}\Vert^2$, it holds that, $f(\gamma{}x+(1-\gamma)y + z) \leq r^2$.

By the definition of $f$ and the triangle inequality for $\Vert\cdot\Vert$ we have
\begin{eqnarray}\label{ie1}
&& f(\gamma{}x+(1-\gamma)y + z) = \Vert{\gamma{}x+(1-\gamma)y + z}\Vert^2 \leq \nonumber \\
&&  \left({\Vert{\gamma{}x+(1-\gamma)y}\Vert + \Vert{z}\Vert}\right)^2 = \nonumber \\
&& \left({\sqrt{f(\gamma{}x+(1-\gamma)y)} + \Vert{z}\Vert}\right)^2 .
\end{eqnarray}

Since $f$ is $\alpha$ strongly convex with respect to $\Vert\cdot\Vert$ we have that
\begin{eqnarray*}
&&f(\gamma{}x+(1-\gamma)y)  \leq \\
&& \gamma{}f(x) + (1-\gamma)f(y) - \frac{\alpha}{2}\gamma(1-\gamma)\Vert{x-y}\Vert^2 \leq \\
&&  r^2 - \frac{\alpha}{2}\gamma(1-\gamma)\Vert{x-y}\Vert^2 .	
\end{eqnarray*}

The function $g(t)=\sqrt{t}$ is concave, meaning $\sqrt{a-b} = g(a-b) \leq g(a) - g'(a)\cdot{}b = \sqrt{a}-\frac{b}{2\sqrt{a}}$. Thus, 
\begin{eqnarray*}
\sqrt{f(\gamma{}x+(1-\gamma)y)}
&\leq & \sqrt{r^2 - \frac{\alpha}{2}\gamma(1-\gamma)\Vert{x-y}\Vert^2} \\
& \leq & r - \frac{\alpha\gamma(1-\gamma)\Vert{x-y}\Vert^2}{4r} .
\end{eqnarray*}
Plugging back in Eq. \eqref{ie1} we have
\begin{eqnarray*}
&&f(\gamma{}x+(1-\gamma)y + z) \leq \\
&& \left({r - \frac{\alpha\gamma(1-\gamma)\Vert{x-y}\Vert^2}{4r} + \Vert{z}\Vert}\right)^2 .
\end{eqnarray*}

By our assumption on $\Vert{z}\Vert$ we have
\begin{eqnarray*}
f(\gamma{}x+(1-\gamma)y + z) &\leq &   \Big(r - \frac{\alpha\gamma(1-\gamma)\Vert{x-y}\Vert^2}{4r}\\
&+&\frac{\alpha}{4r}\gamma(1-\gamma)\Vert{x-y}\Vert^2\Big)^2 \\
&=& r^2 .
\end{eqnarray*}
\end{proof}

\subsection{Proof of Corollary \ref{cor:Lpball_sc}}

\begin{proof}
The strong convexity of the set w.r.t. $\Vert\cdot\Vert_p$ is an immediate consequence of Lemma \ref{lem:strongconvexset}.

Since $\ball_p(r)$ is $\alpha=(p-1)/r$ strongly convex w.r.t. the norm $\Vert\cdot\Vert_p$, we have that given $x,y\in\ball_p(r)$, $\gamma\in[0,1]$ and $z\in\mathbb{R}^n$ such that $\Vert{z}\Vert_p \leq 1$ it holds that
\begin{eqnarray*}
\gamma{}x + (1-\gamma)y + \frac{\alpha}{2}\gamma(1-\gamma)\Vert{x-y}\Vert_p^2z\in\ball_p(r) .
\end{eqnarray*}

For any $p\in(1,2]$ and vector $v\in\mathbb{R}^n$ it holds that
\begin{eqnarray}\label{ie:pnorm}
\Vert{v}\Vert_2 \leq \Vert{v}\Vert_p \leq n^{\frac{1}{p} - \frac{1}{2}}\Vert{v}\Vert_2 .
\end{eqnarray}

Given a vector $z'\in\mathbb{R}^n$ such that $\Vert{z'}\Vert_F \leq 1$ we have that
\begin{eqnarray*}
\Vert{\frac{\alpha}{2}\gamma(1-\gamma)\Vert{x-y}\Vert_2^2z'}\Vert_p = \\
 \frac{\alpha}{2}\gamma(1-\gamma)\Vert{x-y}\Vert_2^2\Vert{z'}\Vert_p .
\end{eqnarray*}
Using Eq. \eqref{ie:pnorm} we have
\begin{eqnarray*}
&&\Vert{\frac{\alpha}{2}\gamma(1-\gamma)\Vert{x-y}\Vert_2^2z'}\Vert_p  \leq \\
&& \frac{\alpha}{2}\gamma(1-\gamma)\Vert{x-y}\Vert_p^2n^{\frac{1}{p}-\frac{1}{2}}\Vert{z'}\Vert_2 \leq \\
&& \frac{\alpha{}n^{\frac{1}{p}-\frac{1}{2}}}{2}\gamma(1-\gamma)\Vert{x-y}\Vert_p^2 .
\end{eqnarray*}

Hence, $\ball_p(r)$ is $\alpha{}n^{\frac{1}{2}-\frac{1}{p}} = \frac{(p-1)n^{\frac{1}{2}-\frac{1}{p}}}{r}$-strongly convex with respect to $\Vert\cdot\Vert_2$. 
\end{proof}

\subsection{Proof of Lemma \ref{lem:pball_linopt}}

\begin{proof}
Since $\Vert{\cdot}\Vert_p$ and $\Vert{\cdot}\Vert_q$ are dual norms, we have using Holder's inequality that for all $x\in\ball_p(r)$,
\begin{eqnarray*}
x\cdot c \geq -\Vert{x}\Vert_p\Vert{c}\Vert_q \geq -r\Vert{c}\Vert_q .
\end{eqnarray*}

Thus choosing $x_i = -\frac{r}{\Vert{c}\Vert_q^{q-1}}\textrm{sgn}(c_i)\vert{c_i}\vert^{q-1}$ we have that
\begin{eqnarray*}
x\cdot c &=& -\sum_{i=1}^n\frac{r}{\Vert{c}\Vert_q^{q-1}}\textrm{sgn}(c_i)\vert{c_i}\vert^{q-1}\cdot{}c_i \\
&=& -\sum_{i=1}^n\frac{r}{\Vert{c}\Vert_q^{q-1}}\vert{c_i}\vert^{q} 
= -\frac{r}{\Vert{c}\Vert_q^{q-1}}\Vert{c}\Vert_q^q \\
&=& -r\Vert{c}\Vert_q .
\end{eqnarray*}

Moreover,
\begin{eqnarray*}
\Vert{x}\Vert_p^p = \frac{r^p}{\left({\Vert{c}\Vert_q^{q-1}}\right)^p}\sum_{i=1}^n\left({\vert{c_i}\vert^{q-1}}\right)^p .
\end{eqnarray*}
Since $p = q/(q-1)$ we have that
\begin{eqnarray*}
\Vert{x}\Vert_p^p = \frac{r^p}{\Vert{c}\Vert_q^q}\sum_{i=1}^n\vert{c_i}\vert^{q} = r^p .
\end{eqnarray*}
Thus we have that $x\in\ball_p(r)$.
\end{proof}

\subsection{Proof of Lemma \ref{lem:sball_linopt}}

\begin{proof}
Since $\Vert{\cdot}\Vert_{S(p)}$ and $\Vert{\cdot}\Vert_{S(q)}$ are dual norms we from Holder's inequality that for all $X\in\ball_{S(p)}(r)$,
\begin{eqnarray*}
X\bullet{}C \geq -\Vert{X}\Vert_{S(p)}\Vert{C}\Vert_{S(q)} \geq -r\Vert{C}\Vert_{S(q)} = r\Vert{\sigma(C)}\Vert_q .
\end{eqnarray*}

By our choice of $X$ we have that

\begin{eqnarray*}
X\bullet{}C &=& \textrm{Tr}(X^{\top}C) = \textrm{Tr}(VDiag(\sigma)^{\top}U^{\top}U\Sigma{}V^{\top}) \\
&=& \textrm{Tr}(VDiag(\sigma)^{\top}\Sigma{}V^{\top}) \\
&=& \textrm{Tr}(V^{\top}VDiag(\sigma)^{\top}\Sigma) = \textrm{Tr}(Diag(\sigma)^{\top}\Sigma) \\
&=&\sum_{i=1}^{\min(m,n)}-\frac{r}{\Vert{\sigma(C)}\Vert_q^{q-1}}\sigma(C)_i^{q-1}\cdot\sigma(C)_i \\
&=& -\frac{r}{\Vert{\sigma(C)}\Vert_q^{q-1}}\sum_{i=1}^{\min(m,n)}\sigma(C)_i^q \\
&=& -r\Vert{\sigma(C)}\Vert_q .
\end{eqnarray*}

Moreover,
\begin{eqnarray*}
\Vert{X}\Vert_{S(p)}^p =  \Vert{\sigma(X)}\Vert_p^p = \frac{r^p}{\left({\Vert{\sigma(C)}\Vert_q^{q-1}}\right)^p}\sum_{i=1}^n\left({\sigma(C)_i^{q-1}}\right)^p .
\end{eqnarray*}
Since $p = q/(q-1)$ we have that
\begin{eqnarray*}
\Vert{X}\Vert_{S(p)}^p = \frac{r^p}{\Vert{\sigma(C)}\Vert_q^q}\sum_{i=1}^n\vert{\sigma(C)_i}\vert^{q} = r^p .
\end{eqnarray*}
Thus we have that $X\in\ball_{S(p)}(r)$.
\end{proof}

\subsection{Proof of Lemma \ref{lem:groupballsc}}

The following lemma will be of use in the proof.

\begin{lemma}\label{lem:spnormineq}
for any matrix $A\in\mathbb{R}^{m\times{n}}$ and $s,p\in(1,2]$ it holds that
\begin{eqnarray*}
\Vert{A}\Vert_{F} \leq \Vert{A}\Vert_{s,p} \leq n^{\frac{1}{s} - \frac{1}{2}}m^{\frac{1}{p} - \frac{1}{2}}\Vert{A}\Vert_F .
\end{eqnarray*}
\end{lemma}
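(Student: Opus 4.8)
The plan is to reduce the claim to the one-dimensional norm-equivalence inequality already recorded in the excerpt as Eq. \eqref{ie:pnorm} (stated there for $\mathbb{R}^n$, but the argument is the same in any dimension $d$): for any $v\in\mathbb{R}^d$ and $t\in(1,2]$ one has $\Vert v\Vert_2 \le \Vert v\Vert_t \le d^{1/t-1/2}\Vert v\Vert_2$. I would apply this twice: once ``inside each row'' (ambient dimension $n$, exponent $s$) and once ``across the rows'' (ambient dimension $m$, exponent $p$).

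First I would set up notation: writing $A_i\in\mathbb{R}^n$ for the $i$-th row of $A$, introduce the two nonnegative vectors $a=(\Vert A_1\Vert_s,\dots,\Vert A_m\Vert_s)$ and $b=(\Vert A_1\Vert_2,\dots,\Vert A_m\Vert_2)$ in $\mathbb{R}^m$. By definition $\Vert A\Vert_{s,p}=\Vert a\Vert_p$, and $\Vert A\Vert_F=\Vert b\Vert_2$ since the Frobenius norm is exactly the Euclidean norm of the vector of row-wise Euclidean norms. Applying Eq. \eqref{ie:pnorm} to each $A_i$ in $\mathbb{R}^n$ with exponent $s$ gives the coordinatewise bounds $b_i\le a_i\le n^{1/s-1/2}b_i$ for every $i$.

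For the lower bound, I would use that $\Vert\cdot\Vert_p$ is monotone under coordinatewise domination of nonnegative vectors to get $\Vert A\Vert_{s,p}=\Vert a\Vert_p\ge\Vert b\Vert_p$, and then $\Vert b\Vert_p\ge\Vert b\Vert_2=\Vert A\Vert_F$ because $p\le 2$. For the upper bound, the same monotonicity together with $a_i\le n^{1/s-1/2}b_i$ gives $\Vert a\Vert_p\le n^{1/s-1/2}\Vert b\Vert_p$, and a second application of Eq. \eqref{ie:pnorm} in $\mathbb{R}^m$ with exponent $p$ gives $\Vert b\Vert_p\le m^{1/p-1/2}\Vert b\Vert_2=m^{1/p-1/2}\Vert A\Vert_F$; chaining the two estimates yields $\Vert A\Vert_{s,p}\le n^{1/s-1/2}m^{1/p-1/2}\Vert A\Vert_F$.

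There is no genuine obstacle here: the only facts needing a one-line justification are that $\Vert v\Vert_p=(\sum_i|v_i|^p)^{1/p}$ is monotone in the absolute values of the coordinates (immediate from the explicit formula), and the base inequality Eq. \eqref{ie:pnorm}, which is the standard power-mean/Hölder estimate and is already taken for granted in the excerpt. So the proof is essentially bookkeeping: two nested applications of $\ell_t$-norm equivalence, one for the inner ($s$) index and one for the outer ($p$) index.
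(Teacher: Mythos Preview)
Your proposal is correct and follows essentially the same approach as the paper: both proofs reduce the claim to two nested applications of the scalar $\ell_t$--$\ell_2$ norm equivalence (once on each row with exponent $s$, once on the vector of row norms with exponent $p$), using that $\Vert A\Vert_F = \Vert A\Vert_{2,2}$. Your write-up is in fact a bit more explicit than the paper's in spelling out the coordinatewise monotonicity of $\Vert\cdot\Vert_p$ that justifies passing from the row-wise bounds to the outer norm.
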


\begin{proof}
For any vector $v\in\mathbb{R}^n$ and $p\in(1,2]$ it holds that
\begin{eqnarray}\label{ie:pnorm2}
\Vert{v}\Vert_2 \leq \Vert{v}\Vert_p \leq n^{\frac{1}{p} - \frac{1}{2}}\Vert{v}\Vert_2 .
\end{eqnarray}
Denote by $A_i$ the $i$th row of $A$. For any $i\in[m]$ and $p\in(1,2]$ it holds that
\begin{eqnarray}\label{ie2}
\Vert{A_i}\Vert_2 \leq \Vert{A_i}\Vert_p \leq n^{\frac{1}{p} - \frac{1}{2}}\Vert{A_i}\Vert_2 .
\end{eqnarray}

Note that by definition $\Vert\cdot\Vert_F \equiv \Vert\cdot\Vert_{2,2}$. Applying Eq. \eqref{ie:pnorm2} and \eqref{ie2} we have,
\begin{eqnarray*}
\Vert{A}\Vert_F &=& \Vert{A}\Vert_{2,2} = \Vert{(\Vert{A_1}\Vert_2, \Vert{A_2}\Vert_2, ..., \Vert{A_m}\Vert_2)}\Vert_2 \\
&\leq &  \Vert{(\Vert{A_1}\Vert_s, \Vert{A_2}\Vert_s, ..., \Vert{A_m}\Vert_s)}\Vert_p \\
&\leq & n^{\frac{1}{s} - \frac{1}{2}}m^{\frac{1}{p} - \frac{1}{2}}\Vert{(\Vert{A_1}\Vert_2, \Vert{A_2}\Vert_2, ..., \Vert{A_m}\Vert_2)}\Vert_2 \\
&=& n^{\frac{1}{s} - \frac{1}{2}}m^{\frac{1}{p} - \frac{1}{2}}\Vert{A}\Vert_F .
\end{eqnarray*}
\end{proof}

We can now prove Lemma \ref{lem:groupballsc}.

\begin{proof}
Let $z,q$ be such that $1/z + 1/s = 1$ and $1/q + 1/p = 1$. Note that $z,q\in[2,\infty)$. The norm $\Vert\cdot\Vert_{z,q}$ is the dual norm to $\Vert\cdot\Vert_{s,p}$ (see \cite{Kakade12} for instance). 

According to Lemma \ref{lem:pnorm_sc}, the functions $\Vert{x}\Vert_s^2$  and $\Vert{x}\Vert_p^2$ are $\alpha_s = 2(s-1)$-strongly convex w.r.t. $\Vert\cdot\Vert_p$ and $\alpha_p = 2(p-1)$-strongly convex w.r.t. $\Vert\cdot\Vert_q$ respectively. Hence by the \textit{strong convexity / smoothness duality} (see Theorem 3 in \cite{Kakade12}) we have that the functions $\Vert{x}\Vert_z^2$ and $\Vert{x}\Vert_q^2$ are $\alpha_s^{-1}$-smooth w.r.t. $\Vert\cdot\Vert_z$ and $\alpha_p^{-1}$-smooth w.r.t. $\Vert\cdot\Vert_q$ respectively. 

By Theorem 13 in \cite{Kakade12} we have that the function $\Vert{X}\Vert_{z,q}^2$ is $(\alpha_p^{-1} + \alpha_s^{-1})$-smooth with respect to the norm $\Vert\cdot\Vert_{z,q}$. Again using the \textit{strong convexity / smoothness duality} we have that $\Vert{X}\Vert_{s,p}^2$ is $\left({\alpha_p^{-1} + \alpha_s^{-1}}\right)^{-1} = \frac{\alpha_p\alpha_s}{\alpha_p+\alpha_s}$ strongly convex with respect to the norm $\Vert\cdot\Vert_{s,p}$. The first part of the lemma now follows from applying Lemma \ref{lem:strongconvexset}.

Since $\ball_{s,p}(r)$ is $\alpha=\frac{(s-1)(p-1)}{(s+p-2)r}$ strongly convex w.r.t. the norm $\Vert\cdot\Vert_{s,p}$, we have that given $X,Y\in\ball_{s,p}(r)$, $\gamma\in[0,1]$ and $Z\in\mathbb{R}^{m\times{n}}$ such that $\Vert{Z}\Vert_{s,p} \leq 1$ it holds that
\begin{eqnarray*}
\gamma{}X + (1-\gamma)Y + \frac{\alpha}{2}\gamma(1-\gamma)\Vert{X-Y}\Vert_{s,p}^2Z\in\ball_{s,p}(r) .
\end{eqnarray*}

Given a matrix $Z'\in\mathbb{R}^{m\times{n}}$ such that $\Vert{Z'}\Vert_{F} \leq 1$ we have that
\begin{eqnarray*}
\Vert{\frac{\alpha}{2}\gamma(1-\gamma)\Vert{X-Y}\Vert_F^2Z'}\Vert_{s,p} = \\
\frac{\alpha}{2}\gamma(1-\gamma)\Vert{x-y}\Vert_F^2\Vert{Z'}\Vert_{s,p}  .
\end{eqnarray*}
Using Lemma \ref{lem:spnormineq} we have
\begin{eqnarray*}
&&\Vert{\frac{\alpha}{2}\gamma(1-\gamma)\Vert{X-Y}\Vert_F^2Z'}\Vert_{s,p} \leq \\
&& \frac{\alpha}{2}\gamma(1-\gamma)\Vert{X-Y}\Vert_{s,p}^2n^{\frac{1}{s}-\frac{1}{2}}m^{\frac{1}{p}-\frac{1}{2}}\Vert{Z'}\Vert_F  \leq \\
&& \frac{\alpha{}n^{\frac{1}{s}-\frac{1}{2}}m^{\frac{1}{p}-\frac{1}{2}}}{2}\gamma(1-\gamma)\Vert{X-Y}\Vert_{s,p}^2 .
\end{eqnarray*}

Hence, $\ball_{s,p}(r)$ is $\alpha{}n^{\frac{1}{s}-\frac{1}{2}}m^{\frac{1}{p}-\frac{1}{2}} = n^{\frac{1}{s}-\frac{1}{2}}m^{\frac{1}{p}-\frac{1}{2}}\frac{(s-1)(p-1)}{(s+p-2)r}$ strongly convex with respect to $\Vert\cdot\Vert_F$. 
\end{proof}

\subsection{Proof of Lemma \ref{lem:groupball_linopt}}

\begin{proof}
Since by choice of $z,q$ it holds that $\Vert\cdot\Vert_{s,p}, \Vert\cdot\Vert_{z,q}$ are dual norms, we have by Holder's inequality that
\begin{eqnarray*}
X\bullet{C} \geq - \Vert{X}\Vert_{s,p}\Vert{C}\Vert_{z,q} \geq -r\Vert{C}\Vert_{z,q} .
\end{eqnarray*}

Thus, choosing $$X_{i,j} = -\frac{r}{\Vert{C}\Vert_{z,q}^{q-1}\Vert{C_i}\Vert_z^{z-q}}\textrm{sgn}(C_{i,j})\vert{C_{i,j}}\vert^{z-1},$$ we have that
\begin{eqnarray*}
&& X\bullet{}C = \sum_{i\in[m],j\in[n]}X_{i,j}C_{i,j} =\\
&& \sum_{i\in[m],j\in[n]}-\frac{r}{\Vert{C}\Vert_{z,q}^{q-1}\Vert{C_i}\Vert_z^{z-q}}\textrm{sgn}(C_{i,j})\vert{C_{i,j}}\vert^{z-1}\cdot{}C_{i,j} = \\
&& \sum_{i\in[m],j\in[n]}-\frac{r}{\Vert{C}\Vert_{z,q}^{q-1}\Vert{C_i}\Vert_z^{z-q}}\vert{C_{i,j}}\vert^{z} = \\
&& \sum_{i\in[m]}-\frac{r}{\Vert{C}\Vert_{z,q}^{q-1}\Vert{C_i}\Vert_z^{z-q}}\sum_{j\in[n]}\vert{C_{i,j}}\vert^z = \\
&& \sum_{i\in[m]}-\frac{r}{\Vert{C}\Vert_{z,q}^{q-1}\Vert{C_i}\Vert_z^{z-q}}\Vert{C_i}\Vert_z^z =  \sum_{i\in[m]}-\frac{r}{\Vert{C}\Vert_{z,q}^{q-1}}\Vert{C_i}\Vert_z^q = \\
&& -\frac{r}{\Vert{C}\Vert_{z,q}^{q-1}}\sum_{i\in[m]}\Vert{C_i}\Vert_z^q = -\frac{r}{\Vert{C}\Vert_{z,q}^{q-1}}\Vert{C}\Vert_{z,q}^q = -r\Vert{C}\Vert_{z,q} .
\end{eqnarray*}

Moreover, for all $i\in[m]$ it holds that

\begin{eqnarray*}
\Vert{X_i}\Vert_{s}^s = \sum_{j=1}^n\vert{X_{i,j}}\vert^s = \frac{r^s}{\Vert{C}\Vert_{z,q}^{s(q-1)}\Vert{C_i}\Vert_z^{s(z-q)}}\sum_{i=j}^n\vert{C_{i,j}}\vert^{s(z-1)} .
\end{eqnarray*}
Since $s=z/(z-1)$ we have
\begin{eqnarray*}
\Vert{X_i}\Vert_{s}^s = \frac{r^s}{\Vert{C}\Vert_{z,q}^{s(q-1)}\Vert{C_i}\Vert_z^{s(z-q)}}\Vert{C_i}\Vert_z^z =  \frac{\Vert{C_i}\Vert_z^{sq - z(s-1)}}{\Vert{C}\Vert_{z,q}^{s(q-1)}}r^s 
\end{eqnarray*}
Using $z=s/(s-1)$ we have that
\begin{eqnarray*}
\Vert{X_i}\Vert_{s}^s =   \frac{\Vert{C_i}\Vert_z^{s(q-1)}}{\Vert{C}\Vert_{z,q}^{s(q-1)}}r^s  .
\end{eqnarray*}

Thus,
\begin{eqnarray*}
\Vert{X_i}\Vert_{s} = \left({\frac{\Vert{C_i}\Vert_z}{\Vert{C}\Vert_{z,q}}}\right)^{q-1}r .
\end{eqnarray*}

Finally, we have that

\begin{eqnarray*}
&&\Vert{X}\Vert_{s,p}^p = \sum_{i\in[m]}\Vert{X_i}\Vert_s^p = 
\sum_{i\in[m]}\left({\frac{\Vert{C_i}\Vert_z}{\Vert{C}\Vert_{z,q}}}\right)^{p(q-1)}r^p = \\
&&\sum_{i\in[m]}\left({\frac{\Vert{C_i}\Vert_z}{\Vert{C}\Vert_{z,q}}}\right)^qr^p =
 \frac{r^p}{\Vert{C}\Vert_{z,q}^q}\sum_{i\in[m]}\Vert{C_i}\Vert_z^q = r^p .
\end{eqnarray*}
Thus, $X\in\ball_{s,p}(r)$.
\end{proof}

\end{document}